\colorlet{darkblue}{blue!50!black}
\newcommand{\p}{\partial}
\newcommand{\e}{\varepsilon}
\newcommand{\R}{{\mathbb R}}
\newcommand{\Z}{{\mathbb Z}}
\newcommand{\T}{{\mathbb T}}
\newcommand{\N}{{\mathbb N}}
\newcommand{\la}{\lambda}
\newcommand{\ty}{\infty}
\newcommand{\de}{\delta}
\newcommand{\mynegspace}{\hspace{-0.12em}}
\newcommand{\lvvvert}{\rvert\mynegspace\rvert\mynegspace\rvert}
\newcommand{\rvvvert}{\rvert\mynegspace\rvert\mynegspace\rvert}
\newcommand{\aA}{{\cal A}}
\newcommand{\BB}{{\cal B}}
\newcommand{\EE}{{\cal E}}
\newcommand{\FF}{{\cal F}}
\newcommand{\GG}{{\cal G}}
\newcommand{\HH}{{\cal H}}
\newcommand{\KK}{{\cal K}}
\newcommand{\LL}{{\cal L}}
\newcommand{\RR}{{\cal R}}
\newcommand{\WW}{{\cal W}}
\newcommand{\XX}{{\cal X}}
\newcommand{\lag}{\langle}
\newcommand{\rag}{\rangle}
\newcommand{\dd}{{\textup d}}
\newcommand{\diver}{\mathop{\rm div}\nolimits}
\theoremstyle{plain}
\newtheorem*{mtheorem}{Main Theorem}
\newtheorem{theorem}{Theorem}[section]
\newtheorem{lemma}[theorem]{Lemma}
\newtheorem{proposition}[theorem]{Proposition}
\newtheorem{corollary}[theorem]{Corollary}
\theoremstyle{definition}
\newtheorem{definition}[theorem]{Definition}
\theoremstyle{remark}
\numberwithin{equation}{section}
\begin{document}

\title{Approximate controllability of Lagrangian trajectories of  the   3D Navier--Stokes system    by a finite-dimensional   force
}
\date{  }
\author{Vahagn Nersesyan\footnote{Laboratoire de
Math\'ematiques, UMR CNRS 8100, Universit\'e de
Versailles-Saint-Quentin-en-Yvelines, F-78035 Versailles,
France}
}
  \maketitle

\begin{abstract} 

In the Eulerian   approach, the motion of an  incompressible   fluid is usually described by the velocity field which is  given by the  Navier--Stokes system.   The velocity field generates a   flow in the space of   volume-preserving diffeomorphisms. The latter     plays a central role in the Lagrangian description of a fluid, since it  allows to    identify  the trajectories of the  individual particles.  In this paper, we show that 
   the velocity field of the fluid and the corresponding  flow of  diffeomorphisms  
      can be simultaneously approximately controlled using a finite-dimensional external force.  
The proof is based on some methods from the geometric control theory introduced by Agrachev and Sarychev.

\smallskip
\noindent
{\bf AMS subject classifications:}  35Q30, 93B05.

\smallskip
\noindent
{\bf Keywords:}  Navier--Stokes system,   Agrachev--Sarychev method, approximate controllability \end{abstract}

  \tableofcontents
\setcounter{section}{-1}

\section{Introduction}
\label{s0}
The motion of an   incompressible fluid     is described by the following Navier--Stokes (NS) system 
 \begin{align}
\p_t u-\nu \Delta u+\lag u, \nabla \rag u+\nabla p  &= f(t,x), 
\quad  \diver u =0,   \label{0.1}\\
u(0)&=u_0,\label{ic}
\end{align} 
where $\nu>0$ is the kinematic viscosity, ${  u} = (u_1(t,x), u_2(t,x), u_3(t,x))$ is the velocity field of the fluid, $p = p(t, x)$ is the pressure, and~$f$ is  an   external force. Throughout this paper,  we shall assume that  the space       variable  $x=(x_1,x_2,x_3)$   belongs  to the torus~$\T^3= \R^3/2\pi\Z^3$. 

\smallskip
The well-posedness of  the  3D NS system \eqref{0.1} is a famous open problem.
Given   smooth data $(u_0,f)$, the existence and uniqueness  of a smooth solution is known to hold only locally in time.  The global existence  is established in the case of  small data.      For  large data the global existence holds in the case of a weak solution, but in that case the uniqueness is   open.

\smallskip
The flow generated by a sufficiently smooth velocity field $u$
    gives  the  Lagrangian 
trajectories of the fluid:
 \begin{equation}\label{0.3}
\dot x=u(t,x),  \quad x(0) =x_0 \in \T^3.   
\end{equation} 
Since the fluid is assumed to be incompressible, for any   $t\ge0$,   the mapping $\phi^u_t: x_0\mapsto x(t)$ belongs to the group  $\textup{SDiff}(\T^3)$ of orientation and volume preserving diffeomorphisms on $\T^3$ isotopic to the identity. This group is often referred   as {\it configuration space} of the
fluid  (cf.~\cite{ArnoldKh,KeWe-09}). Thus for   sufficiently smooth data, we have a path $(u(t), \phi^u_t)$,   which is   defined locally in time.   The main issue addressed in  this paper is the approximate controllability of the couple  $(u(T), \phi^u_T)$ for any $T>0$.   
We shall   assume that the external force is  of the following  form
$$
f(t,x)=h(t,x)+\eta(t,x),
$$where 
$h$ is the fixed part of the     force (given function) and $\eta$ is a control force. To state the main result of this paper, we need to introduce some notation.
Let  us define the space 
\begin{equation}\label{Hdef}
H:=\{u\in L^2(\T^3, \R^3): \diver u=0,\quad \int_{\T^3}u(x)\dd x=0\},
\end{equation} and denote by  $\Pi$  the orthogonal projection    onto $H$ in $L^2(\T^3,\R^3)$. 
    Consider the projection  of system  \eqref{0.1} onto $H$: 
\begin{equation}\label{Elav}
\dot  u +   Lu + B(u) =h(t,x)+\eta(t,x),
\end{equation}where $L=-\Delta$ is the Stokes operator and $B(u):=\Pi (\lag u, \nabla \rag u)$.  Let us set $H^k_\sigma :=H^k(\T^3,\R^3)\cap H$, where~$H^k(\T^3,\R^3)$ is the space of vector functions $v =
(v_1, v_2 , v_3)$ with components in
   the usual   Sobolev space of order $k$   on~$\T^3$.   
    Let~$E$ a be subset of~$H$. 
   We shall say that system~\eqref{Elav} is approximately controllable by an $E$-valued control, if for any $\nu>0$,   $k\ge3$,   $\e>0$,   $T>0$,     $u_0, u_1\in H^k_\sigma$,     $h\in L^2(J_T,H^{k-1}_\sigma)$, and     $\psi\in \textup{SDiff}(\T^3)$,  there is a control $\eta\in L^2([0,T], E)$   and  a solution $u$ of \eqref{Elav}, \eqref{ic}   defined for any $t\in [0,T]$ and  satisfying 
$$
\|u(T) - u_1 \|_{H^k(\T^3)}+ \| \phi_T^u - \psi \|_{  C^1(\T^3)}<\e.
$$
  The following theorem is a simplified
version of our main result (see Section \ref{s2},  Corollary~\ref{D.3.1}). 
\begin{mtheorem}
There is a finite-dimensional subspace $E\subset H $ such that~\eqref{Elav} is approximately controllable by  an $E$-valued control.  \end{mtheorem} 
Roughly speaking, this    shows that, using a finite-dimensional external force, one can drive the fluid flow (which starts at the identity)     arbitrarily close to any   configuration~$\psi\in \textup{SDiff}(\T^3)$. 
 Moreover, near the final position $\psi(x)$,  the particle starting from $x$ will have approximately  the prescribed velocity~$v_1(x):=u_1(\psi(x))$.  Note that  $\phi_T^u$ depends not only on $u(T)$, but on the whole path~$u(t), t\in[0,T]$.  
 Thus one  needs  a   {\it path-controllability property} for   the velocity field  in order to prove    controllability for  $\phi_T^u$. This path-controllability is one of the novelties of this paper, it    is established in 
  Theorem~\ref{T.2.1}.

\smallskip

We   give some explicit examples of finite-dimensional  subspaces $E$ which ensure the above approximate controllability property.    For instance, for any~$\ell\in \Z^3$,  let $\{l(\ell),l(-\ell)\}$  be an arbitrary orthonormal basis in   
$\{x\in \R^3: \langle x, \ell \rangle=0\}$. We show that our problem is controllable 
by $\eta$ taking values in  a space      of the form 
\begin{equation}\label{sate}
E= E(\KK):= \textup{span}\{ l(\pm\ell) \cos\langle \ell, x \rangle,   l(\pm\ell)
\sin\langle \ell, x \rangle: \ell \in\KK \}, \,\,\KK \subset \Z^3
\end{equation}
 if and only if  $\KK$ is a generator of $\Z^3$ (i.e., any $a\in \Z^3$ is a finite linear combination of the elements of $\KK$ with integer coefficients).    The simplest example  of a generator of~$\Z^3$  is  $$\KK=\{(1,0,0), (0,1,0), (0,0,1)\},$$ in which case~$\dim E(\KK)=12$. 
 We also establish approximate controllability   of the system in question  by controls    having two vanishing components. More precisely,   the space $E$ can be    chosen of the form 
 \begin{equation}\label{CLhod}
 E=\Pi\{(0,0,1) \zeta: \zeta \in \HH\},\end{equation} where 
$$
\HH:= \textup{span}\{ \sin\lag m,x\rag,  \cos\lag m,x\rag: m\in\KK \}
$$  and   $  \KK:=\{(1,0,0), (0,1,0), (1,0,1), (0,1,1) \}
$ (i.e., $\dim E=8$). In \eqref{tilde}   an example of a 6-dimensional subspace is given which guarantees the controllability of the 3D NS system.

\smallskip

The strategy of the proof  of Main Theorem is  based on some   methods   introduced by Agrachev and Sarychev in \cite{AS-2005, AS-2006}  (see also the survey~\cite{AS-2008}).  In that papers they prove  approximate controllability  for the  2D NS  and   Euler systems  
by a finite-dimensional force. This method is then developed  and generalised by  several authors for  various   PDE's.     Rodrigues   proves in~\cite{SSRodrig-06}  controllability for the 2D NS system  on a  rectangle with the Lions boundary
conditions, and in~\cite{Rodrig-2006, Rodrig-2008} he extends the results to the case of more general Navier boundary conditions and the Hemisphere under the   Lions boundary
conditions.  The controllability for the   3D NS     system  on the   torus is studied in \cite{shirikyan-cmp2006, shirikyan-aihp2007} by Shirikyan. He also considers   the case of the Burgers equation on the real line in  \cite{Shi-2013} and  on an interval with the Dirichlet boundary conditions  in~\cite{shirikyan-aihp2007b, shirikyan-2010}. 
  Incompressible  and compressible    3D Euler equations are considered by Nersisyan in~\cite{Hayk-2010,   Hayk-2011}, and the controllability for the  2D defocusing cubic Schr\"odinger equation is established  by  Sarychev in \cite{Sar-2012}.    In~\cite{shirikyan-2008a} Shirikyan proves that the Euler equations are not exactly controllable by a finite-dimensional
external force.

\smallskip
All the above papers are concerned with the problem of controllability of the velocity field.  
The controllability of the  Lagrangian  trajectories  of    2D and 3D Euler equations is studied  by Glass and Horsin   \cite{GH-2010,GH-2012}, in the case of boundary controls. For given
two smooth contractible sets $\gamma_1$ and $\gamma_2$ of fluid particles which surround the same
volume, they construct a control such that the corresponding flow drives~$\gamma_1$ arbitrarily close to $\gamma_2$.   
In the context of our paper,    a similar property can be derived from our main result. Indeed,   Krygin shows in~\cite{MR0368067}  that  there is a diffeomorphism  $\psi\in \textup{SDiff}(\T^3)$ such that  $\psi(\gamma_1)=\gamma_2  $. Thus we can find an $E$-valued control $\eta$ such that $\phi_T^u(\gamma_1)$ is arbitrarily close to~$ \gamma_2  $, and, moreover, at time $T$ the particles will have approximately the desired velocity.  

When $E$ is of the form \eqref{CLhod}, our Main Theorem is related to   the recent paper \cite{CL-2012} by Coron and Lissy.  In that paper,  the authors establish      local null controllability of the velocity     for  the 3D NS system  controlled by a distributed force    having two vanishing components (i.e., the controls are valued in a space of the form \eqref{CLhod}, where $\HH$ is the space of space-time $L^2$-functions supported in a given open subset). The reader is referred to the book \cite{MR2302744}   for an introduction to the control theory of    the NS system by distributed controls and for  references on that topic.

\smallskip

  Let us give a   brief (and not completely   accurate) description of how the Agrachev--Sarychev method is adapted to establish  approximate controllability in the above-defined sense. We assume that       $E$ is  given by \eqref{sate} for some generator~$\KK$ of $\Z^3$.     
Let   $\psi\in \textup{SDiff}(\T^3)$ and let $I(t,x)$ be a smooth isotopy   connecting it to the identity: $I(0,x)=x$ and $I(T,x)=\psi(x)$. Then  $\hat u(t,x):=\p_t I(t,I^{-1}(t,x))$ is a divergence-free vector field such that $\phi^{\hat u}_t(x)=I(t,x)$ for all $t\in [0,T]$. In particular, $\phi^{\hat u}_T=\psi$.  The mapping $u\mapsto \phi^u_T$ is continuous from  $L^1([0,T], H^k_\sigma)$ to~$ C^1(\T^3)$, where $L^1([0,T], H^k_\sigma)$ is endowed with the   {\it relaxation norm}   
   $$
   \lvvvert u \rvvvert_{T,k}:= \sup_{t\in [0,T]}  \left\|\int_0^t u(s)\dd s\right\|_{H^k(\T^3)}.
   $$
Hence  we can choose a smooth   vector field  $  u$  sufficiently close to  $\hat u$ with respect to this  norm, so    that
$$ u(0)=u_0, \quad   u(T)=u_1, \quad 
  \|\phi^u_T-\psi\|_{C^1(\T^3)} < \e. 
$$Then $u$ is a   solution of  our system corresponding to a  control   $\eta_0$, which can be explicitly expressed in terms of $u$ and $h$ from   equation~\eqref{Elav}. In general, this control $\eta_0$ is not $E$-valued, so we need to  approximate $u$ appropriately  with solutions corresponding to $E$-valued controls. To this end, we define the sets 
 $$
 \KK_0:=\KK, \quad \KK_j=\KK_{j-1}\cup\{m\pm n: m,n\in \KK_{j-1}\}, \quad j\ge1.
 $$As $\KK$ is a generator of $\Z^3$, one easily gets that $\cup_{j\ge1}\KK_j=\Z^3$, hence     $\cup_{j\ge1} E(\KK_j)$ is dense in $H^k_\sigma$. Let $P_N$ be the orthogonal projection onto $E(\KK_N)$ in $H$.    Then a perturbative result implies that, for a sufficiently large $N\ge 1$, system \eqref{Elav}, \eqref{ic}   with   control $P_N \eta_0$ has a   strong solution $u_N$  verifying   (see Theorem \ref{T:pert} and Lemma \ref{L:1.1})
$$
    \|u_N(T)-u_1\|_{H^k(\T^3)}+
 \|\phi^{u_N}_T-\psi\|_{C^1(\T^3)} < \e.
$$ 
On the other hand, if we 
consider   the following auxiliary  system
\begin{align}
\dot{u}+\nu L (u+\zeta)+B (u+\zeta) =h+\eta \label{dsdjk}
\end{align} with two controls $\zeta$ and $\eta$, then the below two properties hold true  
\begin{description}
\item[Convexification principle.] For any $\e>0$ and any solution $u_j$ of     \eqref{Elav}, \eqref{ic} with   an  $E(\KK_j)$-valued control $\eta_1$, there are $E(\KK_{j-1})$-valued controls $\zeta$ and~$\eta$ and a solution $\tilde u_{j-1}$ of \eqref{dsdjk}, \eqref{ic} such that 
 $$    \|u_j(T)-\tilde u_{j-1}(T)\|_{H^k(\T^3)}+
    \lvvvert  u_j-\tilde u_{j-1} \rvvvert_{T,k } < \e. 
$$
\item[Extension principle.]
For any $\e>0$ and any solution $\tilde u _j$ of    \eqref{dsdjk}, \eqref{ic}    with   $E(\KK_j)$-valued controls $\zeta$ and $\eta$, there is an $E(\KK_{j})$-valued control   $\eta_2$ and a solution $u_j$ of  \eqref{Elav}, \eqref{ic}   such that   
$$    \|u_j(T)-\tilde u_{j}(T)\|_{H^k(\T^3)}+ 
    \lvvvert  u_j-\tilde u_{j} \rvvvert_{T,k} < \e.
$$\end{description}

These two principles and  the
  above-mentioned continuity property  of $\phi_T^u$ with respect to the relaxation norm  imply that,    for   any solution~$u_j$ of     \eqref{Elav}, \eqref{ic} with   an  $E(\KK_j)$-valued control $\eta_1$, there is   an  $E(\KK_{j-1})$-valued control $\eta_2$ and a solution $u_{j-1}$ of  \eqref{Elav}, \eqref{ic}   such that   
$$    \|u_j(T)-\tilde u_{j-1}(T)\|_{H^k(\T^3)}+
     \|\phi_T^{  u_j}-\phi_T^{  u_{j-1}}\|_{C^1(\T^3)}< \e.
$$
Combining this  with the above-constructed solution $u_N$, we get   the    approximate controllability of \eqref{Elav}   by  a control valued in    $E(\KK)=E$.  The proofs of convexification and extension principles are   strongly  inspired by~\cite{shirikyan-cmp2006}.
 
 \subsection*{Acknowledgments}  
The author would like to thank the referee for his careful reading     and   valuable comments. This research   was supported by the ANR grants EMAQS (No~ANR  2011 BS01 017 01) and STOSYMAP (No~ANR 2011 BS01 015~01).

 \subsection*{Notation} 
We denote by $\T^d$  the standard $d$-dimensional torus~$\R^d/2\pi\Z^d$. It is endowed with the metric and  the  measure
induced by the usual Euclidean metric and the Lebesgue measure on $\R^d$. More precisely, if    ${ \mathsf{\Pi}}:\R^d\to \T^d$ denotes the canonical projection, we have 
\begin{align*}
d(x,y)&=\inf\{ |\tilde x-\tilde y| : \mathsf{\Pi}\tilde  x= x, \mathsf{\Pi}\tilde  y= y, \tilde x,\tilde y\in \R^d \}\quad \text{for any $x,y\in \T^d$},\\
\dd(A)&= (2\pi)^{-d} \dd_{\R^d}(\mathsf{\Pi}^{-1}(A)\cap [0,2\pi]^d) \quad \text{for any Borel subset   $A\subset \T^d$}, 
\end{align*}where $|x|=|x_1|+\cdots+ |x_d|, x=(x_1, \ldots, x_d)\in \R^d$ and $\dd_{\R^d}$ is the  Lebesgue measure on $\R^d$. We denote by $\lag \cdot,\cdot\rag$ the scalar product in $\R^d$. 

\smallskip
\noindent
   $L^p(\T^d, \R^d)$  and $H^s(\T^d, \R^d)$ stand for     spaces of vector functions $u =
(u_1, \ldots , u_d)$ with components in
   the usual Lebesgue and Sobolev spaces on~$\T^d$.

\smallskip
\noindent
$C^{k,\la}(\T^d, \R^d)$, 
   $k\ge0$,  $\la\in (0,1]$ is   the  
space of       vector functions $u =
(u_1, \ldots, u_d)$ with components     that are continuous on~$\T^d$ together with their derivatives up to order   $k$, and whose
derivatives of order $k$ are H\"older-continuous of exponent~$\la$,  equipped with the norm
$$
\|u\|_{C^{k,\la}}:= \sum_{|\alpha|\le k} \sup_{x\in\T^d}|D^\alpha u(x)|+\sum_{|\alpha|= k} \sup_{x,y\in\T^d,x\neq y}\frac{|D^\alpha u(x)-D^\alpha u(y)|}{d (x,y)^\la}.
$$ 

\smallskip
\noindent
$H^k_\sigma(\T^d,\R^d): =
H^k(\T^d, \R^d)\cap H$ and~$C^{k,\la}_\sigma (\T^d,\R^d):=C^{k,\la}(\T^d,\R^d)\cap H$, where~$H$ is given by \eqref{Hdef} (with $d$ instead of $3$).
 In what follows,  when the space dimension~$d$ is $3$, we shall   write $L^p,  H^k,   \ldots $ instead of $L^p(\T^3,\R^3),  H^k (\T^3,\R^3) ,  \ldots $.

\smallskip
\noindent
$C^1(\T^d)$ is   the space of continuously differentiable maps from $ \T^d$ to $\T^d$ endowed with the usual distance
$
\|\psi_1-\psi_2\|_{C^1(\T^d)}, $ $  \psi_1,\psi_2\in C^1(\T^d).
$ 

\medskip

Let  $X$ be a Banach  space endowed with a
norm $\|\cdot\|_X$ and $J_T:=[0, T ]$. For $1\leq p<\infty$,  let   $L^p(J_T,X)$ be the
space of measurable functions $u: J_T \rightarrow X$ such that
\begin{equation}
\|u\|_{L^p(J_T,X)}:=\bigg(\int_{0}^T \|u(s)\|_X^p\dd s
\bigg)^{\frac{1}{p}}<\infty.\nonumber
\end{equation}
   The spaces   $C(J_T,X)$ and $W^{k,p}(J_T,X)$ are defined in a similar way. We define the {\it relaxation norm} on $L^1(J_T,X)$ by  
\begin{equation}\label{rexaxX}
   \lvvvert u \rvvvert_{T,X}:= \sup_{t\in J_T}  \left\|\int_0^t u(s)\dd s\right\|_{X}.
   \end{equation}

\smallskip
 
A mapping   $\psi:\T^d\to\T^d$ is volume-preserving if   $\dd (\psi^{-1}(A))= \dd(A)$ for any Borel subset   $A\subset \T^d$. We shall say that $\psi\in C^1(\T^d)$ is orientation-preserving if the differential $D_x\psi$ is an orientation-preserving linear map for   all $x\in \T^d$.
We denote by  $\textup{SDiff}(\T^d)$ be the group  of all  diffeomorphisms  on $\T^d$     preserving  the   orientation and    volume  and isotopic to the identity, i.e.,~$\textup{SDiff}(\T^d)$ is the set of all functions $\psi:\T^d\to\T^d$ such that     there  is a path  $I\in W^{1,\ty}(J_1,  C^1(\T^d))$ with  $I(0,x)=x$, $I(1,x)=\psi(x)$ for all $x\in \T^d$, and $I(t,\cdot )$ is a diffeomorphism on $\T^d$ preserving  the   orientation and    volume for all~$t\in J_1$.

 \section{Preliminaries}

  \subsection{Particle
trajectories}\label{S:2.1} 
In this section,  we study  some existence and stability properties for        the Lagrangian trajectories.       
Let us fix a time $T>0$ and an integer  $d\ge1$.
 For  any    vector field $u \in L^1(J_T, C^{1}(\T^d,\R^d) ) $, we  consider     the following ordinary differential equation  in~$\T^d$
 \begin{equation}\label{1.2.1}
\dot x=u(t,x).  
\end{equation} By standard methods, one can show    that for any $y\in \T^d$ this equation   admits a unique solution $x\in W^{1,1}(J_T,\T^d)$ such that $x(0)=y$ (e.g., see  Chapter~1 in~\cite{HJ-1988} and Section~2.1 in~\cite{shirikyan-2008b}). Moreover,  if   $\phi^u_t: \T^d\to \T^d, t\in J_T$ is the corresponding flow  sending $  y $ to $x(t)  $,      then $\phi^u_t$ is a $C^1$-diffeomorphism on $\T^n$ and 
\begin{equation}\label{E:contin}
      \textup{   $ \phi:  L^1(J_T, C^{1}(\T^d,\R^d) )\to C(J_T,C^{1}(\T^d)), \quad   u\mapsto \phi^{u}_\cdot \quad $  is continuous.}
      \end{equation}
 We shall also use the following stability property with respect to a weaker norm (cf. Chapter 4 in \cite{MR0686793}). 
\begin{lemma}\label{L:1.1}     For any  $\la\in(0,1]$ and  $R>0$, there is  $C:=C(R, \la, T)>0$ such that  
 \begin{equation}\label{1.2.sssq}
\| \phi^u-\phi^{\hat u}\|_{L^\ty(J_T,C^{1}(\T^d))} \le C \lvvvert u-\hat u \rvvvert_{T,C^{1}(\T^d,\R^d)}^{\la/2}  
\end{equation} for any $u, \hat u   \in {L^\ty(J_T, C^{1,\la }(\T^d,\R^d) ) }$ verifying  
\begin{gather}
 \lvvvert u-\hat u \rvvvert_{T,C^{1}(\T^d,\R^d)}<1,\label{arajinanhavze}\\
\|u\|_{{L^\ty(J_T, C^{1,\la }(\T^d,\R^d) ) }}+ \|\hat u\|_{{L^\ty(J_T, C^{1,\la }(\T^d,\R^d) ) }} \le R. \label{arajinanhavze2}
\end{gather}
\end{lemma}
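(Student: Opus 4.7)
The plan is to first obtain a bound of order $\delta := \lvvvert u-\hat u\rvvvert_{T,C^{1}(\T^d,\R^d)}$ in the weaker $L^\infty(J_T,C^0)$-norm via a single integration by parts that exploits the relaxation norm, and then upgrade to the $C^1$-norm through H\"older interpolation against a uniform a priori $C^{1,\la}$-bound on each flow.

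For the $C^0$-estimate, set $W(t,x):=\int_0^t (u-\hat u)(s,x)\,\dd s$, so that by the very definition of the relaxation norm $\|W(t,\cdot)\|_{C^1(\T^d)}\le\delta$ for every $t\in J_T$. The integral form of \eqref{1.2.1}, after inserting $\pm u(s,\phi^{\hat u}_s(y))$, gives
\[
\phi^u_t(y)-\phi^{\hat u}_t(y)=\int_0^t\bigl[u(s,\phi^u_s(y))-u(s,\phi^{\hat u}_s(y))\bigr]\,\dd s+\int_0^t \p_s W(s,\phi^{\hat u}_s(y))\,\dd s.
\]
The first integral is dominated by $R\int_0^t\|\phi^u_s-\phi^{\hat u}_s\|_{C^0}\,\dd s$ and will feed into Gronwall. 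For the second, the chain rule $\tfrac{d}{ds}[W(s,\phi^{\hat u}_s(y))]=\p_s W+D_x W\cdot\hat u$ lets one integrate by parts and obtain
\[
\int_0^t \p_s W(s,\phi^{\hat u}_s(y))\,\dd s=W(t,\phi^{\hat u}_t(y))-\int_0^t (D_xW)(s,\phi^{\hat u}_s(y))\,\hat u(s,\phi^{\hat u}_s(y))\,\dd s,
\]
whose norm is at most $\|W(t)\|_{C^0}+TR\sup_s\|D_xW(s)\|_{C^0}\le(1+TR)\delta$. Gronwall then yields $\|\phi^u-\phi^{\hat u}\|_{L^\infty(J_T,C^0(\T^d))}\le C_1(R,T)\,\delta$.

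Next I invoke the standard ODE regularity for flows of $C^{1,\la}$ vector fields, which provides the uniform a priori bound $\|\phi^u\|_{L^\infty(J_T,C^{1,\la}(\T^d))}+\|\phi^{\hat u}\|_{L^\infty(J_T,C^{1,\la}(\T^d))}\le C_2(R,T)$: the Jacobian $M^u:=D_y\phi^u$ satisfies $\p_t M^u=D_xu(t,\phi^u_t)M^u$, and H\"older continuity of $D_x u$ together with Gronwall gives the $C^{0,\la}$-control of $M^u$. Applying the H\"older interpolation inequality
\[
[f]_{C^1(\T^d)}\le C\,\|f\|_{C^0(\T^d)}^{\la/(1+\la)}\,[f]_{C^{1,\la}(\T^d)}^{1/(1+\la)}
\]
to $f=\phi^u_t-\phi^{\hat u}_t$ and combining with the $C^0$- and $C^{1,\la}$-bounds produces $\|\phi^u_t-\phi^{\hat u}_t\|_{C^1}\le C(R,T)\,\delta^{\la/(1+\la)}$. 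Since $\la\in(0,1]$ forces $\la/(1+\la)\ge\la/2$ and the hypothesis \eqref{arajinanhavze} gives $\delta<1$, one has $\delta^{\la/(1+\la)}\le\delta^{\la/2}$, and \eqref{1.2.sssq} follows.

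The only truly delicate step is the $C^0$-bound: a brute-force estimate without integration by parts would only control $\int_0^t (u-\hat u)(s,\phi^{\hat u}_s(y))\,\dd s$ by $\|u-\hat u\|_{L^1(J_T,C^0)}\le 2RT$, which is useless. The decisive observation is that the relaxation $C^1$-norm simultaneously controls $W$ and $D_xW$, so the $D_x W$ appearing after the integration by parts is precisely what the relaxation norm is designed to absorb; the $C^{1,\la}$-regularity of $u$ and $\hat u$ then enters only through the interpolation step, which is the reason the exponent degrades from $1$ to $\la/2$.
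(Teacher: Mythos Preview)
Your proof is correct and takes a genuinely different route from the paper's. The paper never uses an interpolation inequality or an a priori $C^{1,\la}$-bound on the flows; instead it argues directly at each level via a \emph{time-partition trick}. For the $C^0$-bound, the paper splits $[0,t]$ into $n$ equal subintervals and on each one freezes the base point $\phi^{\hat u}_{\tau_{i-1}}$, obtaining a contribution of order $n^{-1}$ from the Lipschitz error plus $n\,\lvvvert u-\hat u\rvvvert_{T,L^\ty}$ from the relaxation norm; optimising in $n$ yields only the exponent $1/2$. Your integration-by-parts argument is sharper here: by exploiting that the relaxation norm is taken in $C^1$ (so that $D_xW$ is controlled as well), you obtain the $C^0$-bound with exponent $1$ directly, with no partition needed. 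For the $C^1$-bound, the paper differentiates the flow equation and repeats the same partition trick on $\nabla u-\nabla\hat u$, now with a H\"older error of order $n^{-\la}$, which after optimisation gives the exponent $\la/2$. You instead invoke the uniform $C^{1,\la}$-regularity of flows of $C^{1,\la}$ vector fields (a standard Gronwall computation) together with the Landau--Kolmogorov interpolation $\|Df\|_{C^0}\le C\|f\|_{C^0}^{\la/(1+\la)}[Df]_{C^{0,\la}}^{1/(1+\la)}$, obtaining the slightly better exponent $\la/(1+\la)$ before throwing it away to match the statement.

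In short: the paper's approach is entirely self-contained and elementary (only Gronwall and a discretisation), at the cost of a somewhat repetitive computation and a suboptimal $C^0$-exponent; your approach is cleaner and yields sharper intermediate estimates, but relies on two external facts (flow regularity in $C^{1,\la}$ and the interpolation inequality) that must be cited or proved separately. Both are perfectly valid for the purpose of the lemma.
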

\begin{proof} We shall regard $u$ and $\hat u$ as functions on $\R^d$ which are  $2\pi$-periodic in each variable.     
Clearly, it suffices to prove this lemma in the case when $\T^d$ is replaced by~$\R^d$ and $\phi^u_t, \phi^{\hat u}_t: \R^d\to \R^d, t\in J_T$ are the   flows corresponding to $u$ and $\hat u$.

\smallskip
{\it Step~1}. Let us show that there is a constant $C:=C(R, T)>0$ such that
 \begin{equation}\label{aaazez}
\| \phi^u-\phi^{\hat u}\|_{L^\ty(J_T\times \R^d)} \le  C  \lvvvert u-\hat u \rvvvert_{T,L^\ty(\R^d)}^{1/2}.    \end{equation}  
Indeed, we have \begin{align}\label{E:gronw}
\| \phi_t^u-\phi_t^{\hat u}\|_{L^\ty(\R^d)} &= \left\|  \int_0^t (u(s, \phi_s^u)-\hat u(s, \phi_s^{\hat u})) \dd s \right\|_{L^\ty(\R^d)}\nonumber \\&\le\left\|  \int_0^t (u(s, \phi_s^u)-  u(s, \phi_s^{\hat u})) \dd s \right\|_{{L^\ty(\R^d)} }\nonumber\\&\quad+\left\|  \int_0^t (  u(s, \phi_s^{\hat u})-\hat u(s, \phi_s^{\hat u})) \dd s \right\|_{{L^\ty(\R^d)} }=: G_1+G_2.\end{align} 
Then 
 \begin{equation}\label{qsdfsgf}
G_1\le  \|u\|_{L^\ty(J_T, C^{1} (\R^d))} \int_0^t\! \| \phi_s^u-\phi_s^{\hat u}\|_{{L^\ty(\R^d)} } \dd s.\end{equation}
 To estimate $G_2$, let us first note that 
  $$
  |\phi_{t_1}^{\hat u}(y)-\phi_{t_2}^{\hat u}(y)|=\left|\int_{t_1}^{t_2} \hat u(s, \phi_s^{\hat u}(y)) \dd s\right|\le |t_2-t_1|  \, \|\hat u\|_{L^\ty(J_T\times \R^d)}
  $$ for any   $y\in \R^d$ and  $t_1,t_2\in J_T$. 
  Hence
  for any $\eta>0$,
  \begin{equation}\label{E:eeaefhh}
  \sup_{t_1,t_2\in J_T, |t_1-t_2|\le \eta} \|\phi_{t_1}^{\hat u}-\phi_{t_2}^{\hat u}\|_{L^\ty(\R^d)} \le \eta \|\hat u\|_{L^\ty(J_T\times \R^d)}. 
  \end{equation} Taking a partition
   $\tau_i=it/n, i=0, \ldots,n$, we write  
\begin{align}\label{dsdsffytG221}
  G_2&\le  \sum_{i=1}^n\left\|  \int_{\tau_{i-1}}^{\tau_i} (  u(s, \phi_s^{\hat u})-  u(s, \phi_{\tau_{i-1}}^{\hat u})) \dd s \right\|_{{L^\ty(\R^d)} }
\nonumber\\&\quad +\sum_{i=1}^n\left\|  \int_{\tau_{i-1}}^{\tau_i} ( \hat  u(s, \phi_s^{\hat u})-\hat u(s, \phi_{\tau_{i-1}}^{\hat u})) \dd s \right\|_{{L^\ty(\R^d)} }
\nonumber\\&\quad+\sum_{i=1}^n\left\|  \int_{\tau_{i-1}}^{\tau_i} (    u(s, \phi_{\tau_{i-1}}^{\hat u})-\hat u(s, \phi_{\tau_{i-1}}^{\hat u})) \dd s \right\|_{{L^\ty(\R^d)} }\nonumber\\&=:G_{2,1}+G_{2,2}+G_{2,3}.
   \end{align}To estimate $G_{2,1}+G_{2,2}$, we use \eqref{E:eeaefhh}: 
   \begin{equation}\label{uxxumertz}
   G_{2,1}+G_{2,2}\le \frac{T^2}{n}   \|\hat u\|_{L^\ty(J_T\times \R^d)} (\|u\|_{L^\ty(J_T, C^{1}(\R^d) )}+\|\hat u\|_{L^\ty(J_T, C^{1}(\R^d) )}).
\end{equation}We use the relaxation norm defined by \eqref{rexaxX} to bound $G_{2,3}$:
\begin{align*} 
  \left|  \int_{\tau_{i-1}}^{\tau_i} (    u(s, \phi_{\tau_{i-1}}^{\hat u})-\hat u(s, \phi_{\tau_{i-1}}^{\hat u})) \dd s \right|  &\le    \left|  \int_0^{\tau_i} (    u(s, \phi_{\tau_{i-1}}^{\hat u})-\hat u(s, \phi_{\tau_{i-1}}^{\hat u})) \dd s \right|  \nonumber\\&\quad+  \left|  \int_0^{\tau_{i-1}} (    u(s, \phi_{\tau_{i-1}}^{\hat u})-\hat u(s, \phi_{\tau_{i-1}}^{\hat u})) \dd s \right|
  \nonumber\\& \le2 \lvvvert u-\hat u \rvvvert_{T,L^\ty(\R^d)},
   \end{align*}hence 
$$   G_{2,3}\le  2n \lvvvert u-\hat u \rvvvert_{T,L^\ty(\R^d)}.
$$Combining this with   \eqref{dsdsffytG221} and \eqref{uxxumertz}, we get
\begin{align}\label{GGG222}
  G_2\le &  \frac{T^2}{n}   \|\hat u\|_{L^\ty(J_T\times \R^d)} (\|u\|_{L^\ty(J_T, C^{1}(\R^d) )}+\|\hat u\|_{L^\ty(J_T, C^{1}(\R^d) )})\nonumber\\& +2n \lvvvert u-\hat u \rvvvert_{T,L^\ty(\R^d)}.
   \end{align}
If $\lvvvert u-\hat u \rvvvert_{T,L^\ty(\R^d)}=0$, then \eqref{aaazez}  is trivial. Assume that    $\lvvvert u-\hat u \rvvvert_{T,L^\ty(\R^d)}>0$.
   Choosing\footnote{Here $[a]$ stands for the integer part of $a\in\R$.} $n:=[ \lvvvert u-\hat u \rvvvert_{T,L^\ty(\R^d)}^{-1/2}]$, we derive from \eqref{GGG222} and \eqref{arajinanhavze} that 
$$
  G_2\le  C  \lvvvert u-\hat u \rvvvert_{T,L^\ty(\R^d)}^{1/2}.
$$Combining this with  \eqref{E:gronw} and \eqref{qsdfsgf} and applying  the Gronwall inequality, we obtain~\eqref{aaazez}. 

\smallskip
{\it Step~2}. We turn to the proof of \eqref{1.2.sssq}.  
It is easy to verify that  there is a constant  $C_1:=C_1(R, T)>0$ such that 
\begin{equation}\label{ertzrgv}
\|\phi^{\hat u}\|_{L^\ty(J_T,C^1(\R^d))}+\| \p_t \phi^{\hat u}\|_{L^\ty(J_T,C^1(\R^d))}\le   C_1.  
\end{equation}
For   $ j=1, \ldots, d$, we have 
\begin{align}\label{ddsea}
\| \p_j\phi_t^u-\p_j\phi_t^{\hat u}\|_{L^\ty(\R^d)} &= \left\|  \int_0^t ( \lag \nabla u(s, \phi_s^u), \p_j \phi_s^u \rag -\lag \nabla \hat u(s, \phi_s^{\hat u}) , \p_j \phi_s^{\hat u} \rag ) \dd s \right\|_{L^\ty(\R^d)}\nonumber \\&\le \left\|  \int_0^t ( \lag \nabla u(s, \phi_s^u), \p_j \phi_s^u - \p_j \phi_s^{\hat u} \rag ) \dd s \right\|_{L^\ty(\R^d)}\nonumber \\&\quad + \left\|  \int_0^t ( \lag \nabla u(s, \phi_s^u)-\nabla u(s, \phi_s^{\hat u}) , \p_j \phi_s^{\hat u} \rag ) \dd s \right\|_{L^\ty(\R^d)} \nonumber\\&\quad+\left\|  \int_0^t ( \lag \nabla u(s, \phi_s^{\hat u}) - \nabla \hat u(s, \phi_s^{\hat u}) , \p_j \phi_s^{\hat u} \rag ) \dd s \right\|_{L^\ty(\R^d)} \nonumber\\&=: I_1+I_2+I_3.
\end{align}
From   \eqref{arajinanhavze2} it follows that  
$$
I_1\le R \int_0^t\| \p_j\phi_s^u-\p_j\phi_s^{\hat u}\|_{L^\ty(\R^d)} \dd s.
$$
Using \eqref{arajinanhavze2}, \eqref{ertzrgv}, and \eqref{aaazez},
we get   
\begin{align*}
I_2&\le C_1 R  \int_0^t \|  \phi_s^u- \phi_s^{\hat u}\|_{L^\ty(\R^d)} ^\la \dd s \le  C_2     \lvvvert u-\hat u \rvvvert_{T, L^\ty(\R^d)}^{\la/2}.\end{align*}To estimate $I_3$, we integrate by parts and use \eqref{ertzrgv}
\begin{align*}
I_3&\le \left\|     \lag \int_0^t (\nabla u(s, \phi_s^{\hat u}) - \nabla \hat u(s, \phi_s^{\hat u})) \dd s, \p_j \phi_t^{\hat u} \rag    \right\|_{L^\ty(\R^d)}\\&\quad +\left\|  \int_0^t \left( \lag \int_0^s(\nabla u(\theta, \phi_\theta^{\hat u}) - \nabla \hat u(\theta, \phi_\theta^{\hat u}))\dd \theta ,  \p_j (\p_t \phi_s^{\hat u} )\rag \right) \dd s \right\|_{L^\ty(\R^d)}\\&\le C_3 \sup_{s\in [0,t]} \left\|    \int_0^s(\nabla u(\theta, \phi_\theta^{\hat u}) - \nabla \hat u(\theta, \phi_\theta^{\hat u}))\dd \theta   \right\|_{L^\ty(\R^d)}.
\end{align*}Repeating the arguments    of the  proof of \eqref{GGG222} and using the fact that $\nabla   u$ and~$\nabla \hat u$ are H\"older continuous with   exponent $\la$, we obtain that
\begin{align*}
 \sup_{s\in [0,t]} \left\|    \int_0^s(\nabla u(\theta, \phi_\theta^{\hat u}) - \nabla \hat u(\theta, \phi_\theta^{\hat u}))\dd \theta   \right\|_{L^\ty(\R^d)}&\le \frac{C_4}{n^\la}+2n \lvvvert u-\hat u \rvvvert_{T,C^1(\R^d)}\\&\le C_5 \lvvvert u-\hat u \rvvvert_{T,C^1(\R^d)}^{\la/2}
\end{align*} for $n:=[ \lvvvert u-\hat u \rvvvert_{T,C^1(\R^d)}^{-1/2}]$. Combining this with the estimates for $I_1$, $I_2$ and~\eqref{ddsea},  and applying the Gronwall inequality, we arrive at the required result. 
  \end{proof}
By the Liouville theorem (see Corollary~1 in~\cite[p.198]{Arnold78}),  if we   assume additionally that $u$ is divergence-free,  then the flow $\phi_t^u$ preserves the orientation and the volume.  Thus  if       $u\in L^\ty(J_T, C^{1}_\sigma(\T^d,\R^d) ) $,  then  $\phi^u_t\in \textup{SDiff}(\T^d)$ for any~$t\in J_T$. 
 The following proposition shows that,  using a suitable divergence-free field $u$, the flow $\phi^u_t$ can be  driven approximately   to any position $\psi \in \textup{SDiff}(\T^d)$ at time $T$.   
\begin{proposition}\label{P:1.2} 
For any $\e>0, k> 1+ d/2$, $ u_0,u_1\in  H^{k}_\sigma(\T^d,\R^d)$, and $\psi \in \textup{SDiff}(\T^d),$  there is a vector field   $u\in C^\ty(J_T, H^{k}_\sigma(\T^d,\R^d))$ such that $u(0)=u_0, u(T)=u_1$, and 
$$\|\phi^u_T-\psi\|_{C^1(\T^d)}<\e.$$
\end{proposition}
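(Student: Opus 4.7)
The approach follows the sketch in the introduction and leans on the flow-stability estimate of Lemma~\ref{L:1.1}. First, replace $\psi$ by a smooth approximation $\tilde\psi \in \textup{SDiff}(\T^d)\cap C^\infty(\T^d,\T^d)$ with $\|\tilde\psi-\psi\|_{C^1(\T^d)}<\e/2$, together with a smooth volume-preserving isotopy $I\in C^\infty(J_T,C^\infty(\T^d,\T^d))$ satisfying $I(0,x)=x$, $I(T,x)=\tilde\psi(x)$. Set
$$
\hat u(t,x) := \p_t I(t, I^{-1}(t,x)),
$$
which is smooth in $(t,x)$. Volume preservation gives $\det D_x I(t,\cdot)\equiv 1$, whence differentiating in $t$ shows $\diver_x\hat u(t,\cdot)\equiv 0$; moreover, by construction $\phi^{\hat u}_t=I(t,\cdot)$, so $\phi^{\hat u}_T=\tilde\psi$ exactly.

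Next, adjust $\hat u$ on short intervals near the endpoints to match the prescribed boundary data. Fix a small $\delta>0$ and a cutoff $\chi\in C^\infty(\R)$ with $\chi(0)=1$ and $\chi\equiv 0$ on $[1,\infty)$, and define
$$
u(t) := \hat u(t) + \chi(t/\delta)\bigl(u_0-\hat u(0)\bigr) + \chi\bigl((T-t)/\delta\bigr)\bigl(u_1-\hat u(T)\bigr).
$$
Then $u\in C^\infty(J_T,H^k_\sigma(\T^d,\R^d))$, $u(0)=u_0$, $u(T)=u_1$. Since $k>1+d/2$, the Sobolev embedding $H^k(\T^d,\R^d)\hookrightarrow C^{1,\la}(\T^d,\R^d)$ holds for some $\la\in(0,1]$, so the two correction terms have $L^\infty(J_T,C^{1,\la})$-norms controlled by a constant $M$ depending only on $u_0$, $u_1$, and $\hat u$ (independent of $\delta$). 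The time-supports of these corrections having length $\delta$ then yield
$$
\lvvvert u-\hat u\rvvvert_{T,C^1(\T^d,\R^d)}\le 2M\delta,
$$
which tends to zero as $\delta\to 0$.

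For $\delta$ sufficiently small, the hypotheses \eqref{arajinanhavze}--\eqref{arajinanhavze2} of Lemma~\ref{L:1.1} are met (with the above $\la$ and some fixed $R$), so \eqref{1.2.sssq} yields $\|\phi^u_T-\phi^{\hat u}_T\|_{C^1(\T^d)}\le C(2M\delta)^{\la/2}<\e/2$. Combined with $\phi^{\hat u}_T=\tilde\psi$ and $\|\tilde\psi-\psi\|_{C^1(\T^d)}<\e/2$, this proves $\|\phi^u_T-\psi\|_{C^1(\T^d)}<\e$. The one non-trivial input is the smoothing step in the first paragraph: producing a smooth volume-preserving isotopy from the merely $W^{1,\infty}$ one provided by the definition of $\textup{SDiff}(\T^d)$. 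This is classical and can be obtained by mollifying the given isotopy in time and space and then correcting via Moser's lemma to restore exact volume preservation; everything else is a routine application of the flow-stability provided by Lemma~\ref{L:1.1}.
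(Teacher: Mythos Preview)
Your two-step strategy---first produce a smooth divergence-free field $\hat u$ whose time-$T$ flow approximates $\psi$, then perturb near the endpoints to match $u_0,u_1$---is exactly the paper's. The difference lies in how the smoothing is carried out, and here the paper's route is simpler. You smooth the isotopy $I$ itself and then invoke Moser's lemma to restore exact volume preservation; as you acknowledge, this is the one non-trivial input. The paper instead keeps the merely $W^{1,\infty}(J_T,C^1)$ isotopy supplied by the definition of $\textup{SDiff}(\T^d)$, forms the associated velocity field $\hat u\in L^\infty(J_T,C^1)$, and mollifies $\hat u$ directly in $(t,x)$. Since convolution commutes with $\diver$, the mollified field $\hat u_n$ is automatically divergence-free, and the continuity~\eqref{E:contin} gives $\phi^{\hat u_n}_T\to\phi^{\hat u}_T=\psi$ in $C^1$. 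No Moser-type correction is needed, and the resulting $\phi^{\hat u_n}$ is itself a smooth volume-preserving isotopy---so the paper obtains for free what you work to construct.

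A second, minor difference: for the endpoint perturbation you invoke the quantitative relaxation-norm estimate of Lemma~\ref{L:1.1}, while the paper uses only the qualitative continuity~\eqref{E:contin} of $u\mapsto\phi^u$ from $L^1(J_T,C^1)$ to $C(J_T,C^1)$. The latter suffices because your cutoff corrections are already small in $L^1(J_T,C^1)$ (their time-supports have length $\delta$), not merely in the relaxation norm; Lemma~\ref{L:1.1} is overkill here, though of course it also works.
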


\begin{proof} 
{\it Step~1}. We first  forget about the endpoint conditions   $u(0)=u_0, u(T)=u_1$  and show  that    there is a divergence-free vector field $\hat u\in C^\ty (\R\times\T^d,\R^d)$ such that $$\|\phi^{\hat u}_T-\psi\|_{C^1(\T^d)}<\e/2.$$
 Since $\psi \in \textup{SDiff}(\T^d)$,  there  is a path  $I\in W^{1,\ty}(J_T,  C^1(\T^d))$ such that   $I(0,x)=x$, $I(T,x)=\psi(x)$ for all $x\in \T^d$, and $I(t,\cdot )$ is a $C^1$-diffeomorphism on $\T^d$ preserving  the   orientation and the  volume for all $t\in J_T$. Let us  define the vector field  $\hat  u(t,x)=\p_t I(t,I^{-1}(t,x))$. Then we have  $\hat u\in L^\ty(J_T, C^1( \T^d,\R^d))$ and $I(t,x)=\phi_t^{\hat u}(x), t\in J_T$. As $\phi_t^{\hat u}$   preserves  the   orientation and the  volume,  for  any $g\in C^1(\T^d,\R)$ 
\begin{align*} 
0&=\frac{\dd}{\dd t}\int_{\T^d} g(\phi^{\hat u}_t(y))\dd y= \int_{\T^d} \lag \nabla g(\phi^{\hat u}_t(y)), \p_t \phi^{\hat u}_t(y)\rag \dd y \nonumber\\&=\int_{\T^d} \lag \nabla g(\phi^{\hat u}_t(y)), {\hat u}(t, \phi^{\hat u}_t(y))\rag \dd y=\int_{\T^d}     g(y) \diver  {\hat u}(t, y)     \dd y.
\end{align*} 
 This shows that $ \hat u $ is divergence-free. Taking a    sequence
      of mollifying kernels    $\rho_n\in C^\ty(\R\times\T^d,\R), n\ge1$,    we 
   consider      $\hat u_n:= \rho_n*\hat u=(\rho_n*\hat u_1,\ldots, \rho_n*\hat u_d)   \in C^\ty (\R\times\T^d,\R^d)$. Then $\hat u_n$ is     also divergence-free, since $\diver \hat u_n=\rho_n*\diver\hat u=0$, and $\|\hat u_n-\hat u\|_{L^\ty(J_T,C^1(\T^d,\R^d))}\to 0$ as $n\to\ty$.  By \eqref{E:contin}, this  implies that  $\|\phi_T^{\hat u_n}-\phi_T^{\hat u}\|_{C^1(\T^d)}\to 0$ as $n\to\ty$.    Since  $\phi_T^{\hat u}=\psi$, we get the required result with $\hat u=\hat u_n$ for sufficiently large~$n\ge1$. 

\smallskip
{\it Step~2}. By the   Sobolev embedding,~$H^k\subset C^{1}(\T^d)$ for $ k>1+d/2$ (e.g., see~\cite{adams1975}).
For any~$\de>0$, we take  an arbitrary   $  u\in $ $ C^\ty(J_T, H^{k}_\sigma(\T^d,\R^d))$ satisfying 
\begin{align*}
&u(0)=u_0, \quad   u(T)=u_1,\\
 &\|u-\hat u\|_{L^1(J_T, C^1(\T^d, \R^d))} < \de. 
\end{align*}Then by Step 1 and \eqref{E:contin}, we have
\begin{align*}
\|\phi^u_T-\psi\|_{C^1(\T^d)}&\le \|\phi^u_T-\phi^{\hat u}_T\|_{C^1(\T^d)}+\|\phi^{\hat u}_T-\psi\|_{C^1(\T^d)}\\&< \e/2+\e/2=\e
\end{align*}
  for sufficiently small $\de>0$.

 \end{proof}

\subsection{Existence of strong solutions 
 }\label{S:lav}
 In what follows, we shall assume that $d=3, k\ge3$, and  $\nu = 1$.  In this section, we prove a perturbative result on   existence of strong solutions for
   the
   evolution equation   
\begin{equation}\label{E:21}
\dot u +   Lu + B(u) =g,
\end{equation}where    
  $B(a,b):=\Pi\{\langle a,\nabla\rangle b\}$ and 
$B(a):=B(a,a)$.  Along with \eqref{E:21},    we 
consider the  following more general equation 
\begin{equation}
\dot{u}+L(u+\zeta)+B(u+\zeta)=g.
\label{E1:pert2}\end{equation}Let us fix any    $T>0$   and introduce the space $\XX_{T,k}:= C(J_T,H^{k}_\sigma)\cap L^2(J_T,H_\sigma^{k+1})$ endowed with the norm 
$$
\|u\|_{\XX_{T,k}}:=\|u\|_{L^\ty(J_T,H^{k})}+\|u\|_{ L^2(J_T,H^{k+1})}.
$$ The following   result is a version  of Theorem~1.8 and Remark~1.9 in~\cite{shirikyan-cmp2006} and Theorem~2.1 in \cite{Hayk-2010} in the case of  the 3D NS system in the spaces $H^k, k\ge3$. For the sake of completeness,  we give all the details of the proof,  even though it is very close to the proofs of the previous results.
\begin{theorem}\label{T:pert}
 Suppose that for some functions  $\hat u_0\in H^{k}_\sigma$,   $\hat \zeta \in L^4
(J_T,H^{k+1}_\sigma)$, and $\hat g\in L^2(J_T,H^{k-1}_\sigma)$ problem
(\ref{E1:pert2}), (\ref{ic}) with $u_0=\hat u_0$, $\zeta=\hat\zeta$, and
$g=\hat g$ has a solution $\hat u\in \XX_{T,k}$. Then there are
positive constants $\delta$ and $C$ depending only~on  
\begin{equation}
\|\hat \zeta\|_{L^4(J_T,H^{k+1})}+\|\hat g\|_{L^2(J_T,H^{k-1})}+ \|\hat u\|_{\XX_{T,k}}
   \nonumber
\end{equation}
such that the following statements hold.

\begin{enumerate}
\item[(i)] If $u_0\in H^{k}_\sigma,$ $\zeta\in L^4(J_T,H^{k+1}_\sigma)$, and $g\in L^2(J_T,H^{k-1}_\sigma)$  satisfy the
inequality 
\begin{equation}\label{E1:delt2}
\|u_0-\hat u_0\|_{k}+ \|\zeta-\hat\zeta\|_{L^4(J_T,H^{k+1})} + \|g-\hat g\|_{L^2(J_T,H^{k-1})}<
\delta,
\end{equation}
then problem (\ref{E1:pert2}), (\ref{ic}) has a unique
solution $ u\in \XX_{T,k}.$
\item[(ii)] Let $$\RR:H^{k}_\sigma\times L^4 (J_T,H^{k+1}_\sigma)\times L^2(J_T,H^{k-1}_\sigma)\rightarrow \XX_{T,k}$$
be the operator that takes each triple $(u_0,\zeta,g )$ satisfying
(\ref{E1:delt2}) to the solution $u$ of (\ref{E1:pert2}),
(\ref{ic}). Then
\begin{align}
\|\RR(u_0,\zeta,g )&-\RR(\hat u_0,\hat\zeta,\hat g )\|_{\XX_{T,k}}\le
C\big( \|u_0-\hat u_0\|_{k}\nonumber\\&+
\|\zeta-\hat\zeta\|_{L^4(J_T,H^{k+1})}+\|g-\hat g\|_{L^2(J_T,H^{k-1})}\big).\nonumber
 \end{align}
 \end{enumerate}
\end{theorem}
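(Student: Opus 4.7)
\smallskip

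The plan is to prove both assertions by a standard linearization-and-contraction argument around the reference solution $\hat u$, and to obtain (ii) as a by-product of the a priori estimates used for (i). Write $v:=u-\hat u$, $w:=\zeta-\hat\zeta$, $f:=g-\hat g$, and $v_0:=u_0-\hat u_0$. Subtracting the equation satisfied by $\hat u$ from \eqref{E1:pert2} and using the bilinearity of $B$, one obtains
$$
\dot v+Lv+Lw+B(\hat u+\hat\zeta,v+w)+B(v+w,\hat u+\hat\zeta)+B(v+w)=f,\quad v(0)=v_0.
$$
The goal is to show that for $\delta$ sufficiently small, the map that sends $v\in\XX_{T,k}$ to the solution of the \emph{linear} heat-type equation obtained by treating all occurrences of $v$ on the right-hand side as data is a contraction on a small closed ball of $\XX_{T,k}$, with the contraction constants depending only on the quantity displayed in the statement.

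The key step is the a priori estimate. Apply $(-\Delta)^{k/2}$, take the $L^2$ inner product with $(-\Delta)^{k/2}v$, and use that $\langle Lv,v\rangle_{k}=\|v\|_{k+1}^2$ to get
$$
\tfrac12\tfrac{d}{dt}\|v\|_k^2+\|v\|_{k+1}^2\le |\langle Lw,v\rangle_k|+\sum_{\text{trilinear}}|\langle B(\cdot,\cdot),v\rangle_k|+|\langle f,v\rangle_k|.
$$
For the $Lw$ term I would integrate by parts to trade one derivative and use Cauchy--Schwarz, giving a bound by $\|w\|_{k+1}\|v\|_{k+1}$ absorbed by $\tfrac14\|v\|_{k+1}^2$ and $C\|w\|_{k+1}^2$. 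For the trilinear terms I would use that $H^k$ is a Banach algebra continuously embedded in $C^1$ for $k\ge 3$ in dimension three (so $B$ maps $H^k\times H^{k+1}\to H^k$ with the correct losses), and bound each occurrence by a product of one $H^{k+1}$ factor and two $H^k$ factors. Time integration then yields
$$
\|v\|_{\XX_{T,k}}^2\le C_1\bigl(\|v_0\|_k^2+\|w\|_{L^4(J_T,H^{k+1})}^4+\|f\|_{L^2(J_T,H^{k-1})}^2\bigr)+C_2 M^2\|v\|_{\XX_{T,k}}^2+C_3\|v\|_{\XX_{T,k}}^4,
$$
where $M$ denotes the quantity in the theorem; the key point is that the coefficient $C_2 M^2$ of the linear-in-$v$ term can be absorbed on the left only on a short time interval a priori. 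To deal with the full interval, I would chop $J_T$ into finitely many subintervals of length chosen so that on each piece $C_2 M^2$ is strictly less than $1$, run the contraction on each subinterval, and glue.

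Once the a priori estimate is in place, the existence and uniqueness statement in (i) follows by a standard Picard iteration (or Banach fixed-point theorem) in $\XX_{T,k}$: the quadratic and cubic terms in $v$ are stabilized by choosing $\delta$ small, and the linear contraction holds on each subinterval by the splitting above. For (ii), one observes that the same estimate above, applied without any smallness assumption on $v$ itself, gives directly
$$
\|v\|_{\XX_{T,k}}\le C\bigl(\|v_0\|_k+\|w\|_{L^4(J_T,H^{k+1})}+\|f\|_{L^2(J_T,H^{k-1})}\bigr),
$$
which is exactly the Lipschitz bound claimed for $\RR$. The main obstacle I expect is the bookkeeping of the trilinear estimates, in particular handling the mixed terms $B(\hat u+\hat\zeta,v+w)+B(v+w,\hat u+\hat\zeta)$ so that they depend only on $L^4(J_T,H^{k+1})$-norms of $\hat\zeta$ and $w$ and not on stronger norms; this forces the use of H\"older in time with exponents $(4,4,2)$, matching the $L^4$ assumption on $\zeta$ and the $L^2(J_T,H^{k+1})$ control coming from $\XX_{T,k}$.
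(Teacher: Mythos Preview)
Your overall plan---subtract the reference solution, derive an energy inequality for the difference $v$ at the $H^k$ level, and close by smallness---is the same as the paper's. But two points in your execution do not work as written.

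First, the ``chopping'' argument is confused. In your displayed inequality the linear coefficient is a \emph{fixed} number $C_2M^2$ (with $M$ the norm of the reference data); shortening the time interval does not make $C_2M^2$ smaller. What actually becomes small on a short subinterval is the \emph{local} quantity $\int_{t_i}^{t_{i+1}}\|\hat u+\hat\zeta\|_k^2\,dt$, which is what sits in front of $\|v\|_{L^\infty(H^k)}^2$ before you pass to the crude global bound. The paper avoids this detour entirely: it keeps the differential inequality, applies the standard Gronwall lemma to absorb the linear term (with constant $\exp(C\int_0^T(\|\hat u\|_k^2+\|\hat\zeta\|_k^2+\|\eta\|_k^2)\,dt)$), and is then left with $\|w(t)\|_k^2\le C_2A+C_2\int_0^t\|w\|_k^3\,ds$. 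That residual cubic term is handled by a Bihari-type nonlinear Gronwall step: setting $\Phi(t)=A+\int_0^t\|w\|_k^3$, one integrates $\dot\Phi\le C\Phi^{3/2}$ to get $\Phi(t)\le 4A$ for $t\le (C_3\sqrt A)^{-1}$, and then chooses $\delta$ so small that $(C_3\sqrt A)^{-1}\ge T$. No subdivision, no contraction mapping.

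Second, your derivation of (ii) ``without any smallness assumption on $v$'' cannot be right as stated: the term $C_3\|v\|_{\XX_{T,k}}^4$ in your own inequality is genuinely superlinear and cannot be absorbed unless you already know $\|v\|_{\XX_{T,k}}$ is small. The Lipschitz bound in (ii) comes only \emph{after} (i) has been established, by feeding the smallness of $v$ (guaranteed by $\delta$) back into the same energy inequality.

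A smaller point: the paper exploits the cancellation estimate $|(B(a,b),L^kb)|\le C\|a\|_k\|b\|_k^2$ for $k\ge 3$, which makes the quadratic-in-$w$ contribution to the energy inequality a clean $\|w\|_k^3$ rather than the $\|w\|_k^4$ you would get from the cruder product estimate $\|B(a,b)\|_k\le C\|a\|_k\|b\|_{k+1}$ alone. Your route still closes, but this is why the paper's argument is tidier.
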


\begin{proof} 
We use the following standard estimates for
the bilinear form $B$
\begin{align}
\|B(a,b)\|_k\leq C\|a\|_k\|b\|_{k+1}  \qquad &\text{for  }
k\ge 2\label{E1:B1},\\
|(B(a,b),L^k b)|\leq C\|a\|_k\|b\|^2_{k}  \qquad &\text{for  }
k\ge 3\label{E1:B2}
\end{align}
for any $a\in H_\sigma^k$ and $ b \in H_\sigma^{k+1}$ (see Chapter 6 in 
\cite{CF1988}).
We are looking for a solution of  (\ref{E1:pert2}), (\ref{ic}) of the form  $u=\hat u+w$. We have the following equation   for $w$:
\begin{align}
&\dot{w}+L(w+\eta)+B(w+\eta,\hat u+\hat\zeta)+B(\hat u +\hat \zeta,w+\eta)
+B(w+\eta)=q,\nonumber\\
&w(0,x)=w_0(x),\label{E1:eqwin}
\end{align}
where   $w_0:=u_0-\hat u_0$, $\eta:=\zeta-\hat \zeta$, and $q:=g-\hat g$. Setting $\tilde B(u,v):=B(u,v)+B(v,u) $,  we get that 
\begin{equation}
 \dot{w}+Lw+B(w)+\tilde{B}(w,\eta)+\tilde{B}(w,\hat u)+\tilde{B}(w,\hat \zeta)\!=\!q-(L\eta+B(\eta)+\tilde{B}(\hat u,\eta)+\tilde{B}(\hat \zeta,\eta)),\label{E1:pert5}
\end{equation} Using \eqref{E1:B1}, we see that for any $\e>0$, we can choose $\de\in(0,1)$ in  (\ref{E1:delt2}) such that 
$$
\|w_0\|_k+\|q-(L\eta+B(\eta)+\tilde{B}(\hat u,\eta)+\tilde{B}(\hat \zeta,\eta))\|_{L^2(J_T, H^{k-1})}<\e.
$$
 Then, using some standard methods, one gets     that  system (\ref{E1:pert5}), (\ref{E1:eqwin}) has  a solution~$w\in \XX_{T,k}$  for sufficiently small $\e>0$ (see   Section 4 of Chapter~17 in  \cite{taylor1996}).  

\medskip
To prove $(ii)$, we multiply   (\ref{E1:pert5}) by $L^{{k}}w$ and use estimates 
(\ref{E1:B1}) and  (\ref{E1:B2}) 
\begin{align*}
\frac{1}{2}\frac{d}{dt}\|w\|^2_{{k}}+&\|w\|^2_{{k+1}}  \leq
C\bigg(\|w\|^3_{{k}}+ \|w\|_{{k+1}}\|w\|_{{k}}\big(
\|\eta\|_{k}+\|\hat u\|_{k} +\|\hat\zeta\|_{k}\big)\nonumber\\
& +\|w\|_{{k+1}}\big( \|q\|_{k-1}+\|\eta\|_{k+1}+
\|\eta\|_{k}(\|\eta\|_{k-1}+\|\hat u\|_{k}+\|\hat \zeta\|_{k}) \big)\bigg)
. \end{align*}This implies that 
\begin{align*}
\frac{1}{2}\frac{d}{dt}\|w\|^2_{{k}}+\frac{1}{2}&\|w\|^2_{{k+1}}  \leq
C_1\bigg(\|w\|^3_{{k}}+ \|w\|_{{k}}^2\big(
\|\eta\|_{k}^2+\|\hat u\|_{k}^2 +\|\hat\zeta\|_{k}^2\big)\nonumber\\
& +  \Big[ \|q\|_{k-1}^2+\|\eta\|_{k+1}^4 
  +\|\eta\|_{k}^2 (\|\hat u\|_{k}^2+\|\hat \zeta\|_{k}  ^2)\Big]\bigg)
.
\end{align*}
 Integrating this inequality and setting $$A:=\|w_0\|_k^2+\int_0^T\Big[ \|q\|_{k-1}^2+\|\eta\|_{k+1}^4 
  +\|\eta\|_{k}^2 (\|\hat u\|_{k}^2+\|\hat \zeta\|_{k}  ^2)\Big]\dd t,$$  we obtain
  \begin{equation}
  \|w\|^2_{{k}} +\int_0^t \|w\|^2_{{k+1}}    \leq 2A+
2 C_1 \int_0^t\bigg(\|w\|^3_{{k}}+ \|w\|_{{k}}^2\big(
\|\eta\|_{k}^2+\|\hat u\|_{k}^2 +\|\hat\zeta\|_{k}^2\big)\bigg)\dd t
.\label{E1:14}
\end{equation}
  By \eqref{E1:delt2}, we have that $ \|\eta\|_{L^4(J_T,H^{k+1})}\le \de<1$. So the Gronwall inequality gives that
 \begin{equation}
\|w\|_{{k}}^2\leq C_2A+ C_2\int_0^t \|w(s,\cdot)\|_{{k}}^3\dd s
, \quad    t\in J_T, \label{noranhav321}
\end{equation}
where  $C_2>0$ depends only on  
$\|\hat u\|_{L^2(J_T,H^{k})}+\|\hat\zeta\|_{L^2(J_T,H^{k})}$.  Let us denote $$\Phi(t):=A+ \int_0^t \|w(s,\cdot)\|_{{k}}^3\dd s,\quad t\in J_T.$$ Since the case $A=0$ is trivial, we can assume that $A>0$ and $\Phi(t)>0$ for all $t\in J_T$. Thus \eqref{noranhav321} can be written as
$$
 \left(\dot\Phi(t)\right)^{2/3}  \le  C_2 \Phi(t) ,
$$which   is equivalent to
$$
\frac{\dot\Phi(t)}{\left(\Phi(t)\right)^{3/2}} \le C_3, \quad C_3:= C_2^{3/2}.
$$Integrating this inequality,  we get
 \begin{equation}\label{akzlexbd}
\Phi(t)\le \frac{A}{(1-tC_3\sqrt{A}/2)^2}\le 4A \,\,\,\text{for
any }\,t\le\frac{1}{C_3\sqrt{A}}.
\end{equation}  Choosing  $\delta>0$ so small   that $\frac{1}{C_3\sqrt{A}}\geq T$ and using \eqref{noranhav321} and \eqref{akzlexbd}, we obtain 
$$
\|w(t)\|_{{k}}^2\le C_2 \Phi(t) \le   4C_2A \,\,\,\text{for
any }\,t\in J_T.
$$  
    Combining this with  \eqref{E1:14},  we get   for any $t\in J_T$
\begin{equation*}
  \|w\|^2_{{k}} +\int_0^t \|w\|^2_{{k+1}}    \leq C_4A \le C_5  \big( \|w _0 \|_{k}^2 +
\|\eta\|_{L^4(J_T,H^{k+1})}^2+\|q\|_{L^2(J_T,H^{k-1})}^2\big).
\end{equation*}
 This completes the proof  of the theorem.  \end{proof}

 \section{Approximate controllability of the NS  system}
\label{s2}
 In this section, we state the main results of this paper.  Let us fix any $T>0$ and $k\ge3$, and consider the NS   system
\begin{align}
\dot{u}+Lu+B (u)&=h(t)+\eta(t),\,\,\,
\label{2.1}\\
u(0,x)&=u_0(x),\label{2.2}
\end{align}
where $h\in L^2  (J_T,H^{k-1}_\sigma)$ and $u_0\in
H^k_\sigma$ are given functions  and $\eta$ is a control taking values in a finite-dimensional space $E\subset H^{k+1}_\sigma$.   
 We denote by $\Theta(h,u_0)$ the set of functions~$\eta\in
L^2(J_T,H^{k-1}_\sigma)$ for which (\ref{2.1}), (\ref{2.2}) has
a solution~$u$ in~$\XX_{T,k}$.  
 By Theorem~\ref{T:pert},      $\Theta(h,u_0)$ is an open subset of
$L^2(J_T,H^{k-1}_\sigma)$.  
 Recall that $\RR(\cdot,\cdot,\cdot)$ is the operator defined in Theorem~\ref{T:pert}. To simplify   notation, we write $\RR(u_0,h+\eta)$ instead of
$\RR(u_0,0,h+\eta)$ for any  $\eta\in \Theta(h,u_0)$. The     embedding~$H^3\subset C^{1,1/2}$   implies that the flow $\phi_t^{\RR(u_0,h+\eta)}$ is  well defined for any~$t\in J_T$. 
We set $$Y_{T,k}:= \XX_{T,k}\cap W^{1,2}(J_T, H^{k-1}_\sigma).$$
 We shall use the following notion of   controllability.
\begin{definition}\label{D:1} Equation (\ref{2.1}) is said to be {\it approximately controllable}
at time~$T$ by an $E$-valued control  if for any $\e>0$ and       any $\varphi\in Y_{T,k}$    there is a control $\eta
\in \Theta(h,u_0)\cap L^2(J_T,E) $ such that
\begin{equation}\label{uxxum123}
\|\RR_T(u_0,h+\eta) - \varphi(T) \|_{k}+ \lvvvert  {\RR(u_0,h+\eta)} - \varphi \lvvvert_{ T,  k}+\|\phi^{\RR(u_0,h+\eta) }- \phi^{\varphi} \|_{L^\ty(J_T,C^1)}<\e,
\end{equation}where $u_0=\varphi(0)$ and $\lvvvert \cdot \lvvvert_{ T,  k}:=\lvvvert  \cdot \lvvvert_{ T, H^k}$.   
\end{definition}  
 Let us recall some notation introduced in \cite{AS-2005,AS-2006}, and  \cite{shirikyan-cmp2006}. For any finite-dimensional subspace
$E\subset H_\sigma^{k+1}$, we denote by $\FF(E)$ the largest
vector space $F\subset H^{k+1}_\sigma$ such that for any
$\eta_1\in F$ there are vectors\footnote{ The integer $ p$ may depend  on $\eta_1$.} $\eta, \zeta^1,\ldots ,\zeta^ p\in
E$   satisfying
the relation
\begin{equation}\label{E2:sahmnf}
\eta_1=\eta-\sum_{i=1}^{ p} B(\zeta^i).
\end{equation}
 As $E$ is a
finite-dimensional subspace and $B$ is a bilinear operator, the set of all vectors $\eta_1\in H^{k+1}_\sigma$ of the form \eqref{E2:sahmnf} is contained
  in a finite-dimensional space.  It is easy
to see that if subspaces $G_1,G_2  \subset H_\sigma^{k+1} $ are composed of elements $\eta_1$ of the form 
(\ref{E2:sahmnf}), then so does $G_1 + G_2$.  Thus the   space $\FF(E)$ is well defined.  We define $E_j$ by the
rule
\begin{equation}\label{Esahm}
E_0=E,\quad E_j=\FF(E_{j-1})\quad \textrm{for} \quad j \geq
1,\quad E_\infty=\bigcup_{j=1}^\infty E_j. 
\end{equation} Clearly, $E_j$ is a non-decreasing sequence of subspaces. We   say that $E$ is {\it saturating} in $ H^{k-1}_\sigma$  if $E_\ty$  is dense in $ H^{k-1}_\sigma$.
The following theorem is the main result of this paper.
\begin{theorem}\label{T.2.1}
Assume that $E 
$ is  a finite-dimensional subspace of $H_\sigma^{k+1}$ and $h\in L^2(J_T,H^{k-1}_\sigma)$.  If   $E 
 $ is saturating in  $ H^{k-1}_\sigma$, then~(\ref{2.1})  is approximately controllable  at   time $T$ by controls  
$\eta\in C^\infty(J_T,E)$ in the sense of Definition~\ref{D:1}.
\end{theorem}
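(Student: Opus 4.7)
The plan is to use the Agrachev--Sarychev saturation scheme, adapted so that the approximation is controlled in three norms at once: the $H^k$-distance at time $T$, the relaxation norm $\lvvvert\cdot\rvvvert_{T,k}$, and the $C^1$-distance of the Lagrangian flows required by Definition~\ref{D:1}. As a first step I would reduce to the case of a high-level control. Given $\varphi\in Y_{T,k}$, set $u_0:=\varphi(0)$ and define
$$
\eta^\ast:=\dot\varphi+L\varphi+B(\varphi)-h\in L^2(J_T,H^{k-1}_\sigma),
$$
so that $\varphi=\RR(u_0,h+\eta^\ast)$ and $\eta^\ast\in\Theta(h,u_0)$. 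Since $E_\infty=\bigcup_j E_j$ is dense in $H^{k-1}_\sigma$, mollifying in time lets me pick, for any $\delta>0$, a smooth $\eta_N\in C^\infty(J_T,E_N)$, for some sufficiently large $N$, with $\|\eta_N-\eta^\ast\|_{L^2(J_T,H^{k-1})}<\delta$. Theorem~\ref{T:pert} then gives $u_N:=\RR(u_0,h+\eta_N)\in\XX_{T,k}$ with $\|u_N-\varphi\|_{\XX_{T,k}}\le C\delta$, which controls $\|u_N(T)-\varphi(T)\|_k$ and $\lvvvert u_N-\varphi\rvvvert_{T,k}$; by the Sobolev embedding $H^k\subset C^{1,1/2}$ for $k\ge 3$, Lemma~\ref{L:1.1} then yields $\|\phi^{u_N}-\phi^{\varphi}\|_{L^\infty(J_T,C^1)}$ small as well.

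Next I would establish the Convexification and Extension principles sketched in the Introduction, following the scheme of \cite{shirikyan-cmp2006}. Given an $E_j$-valued control producing solution $u_j\in\XX_{T,k}$, the convexification decomposes each $\eta_1(t)\in E_j=\FF(E_{j-1})$ as $\eta(t)-\sum_{i}B(\zeta^i(t))$ with $\eta,\zeta^i$ taking values in $E_{j-1}$, and replaces $\sum_i B(\zeta^i)$ by $B(\zeta)$ for a rapidly oscillating $E_{j-1}$-valued $\zeta$; time-averaging produces a solution $\tilde u_{j-1}$ of the auxiliary equation \eqref{dsdjk} that agrees with $u_j$ up to arbitrarily small $H^k$-distance at $t=T$ and arbitrarily small $\lvvvert\cdot\rvvvert_{T,k}$-distance on $J_T$. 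The extension principle then absorbs $\zeta$ back, producing a genuine $E_{j-1}$-valued solution of \eqref{2.1}, \eqref{2.2} with the same two norm estimates. Uniform $\XX_{T,k}$-boundedness of all intermediate trajectories is maintained by Theorem~\ref{T:pert} applied at each stage.

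Composing these two principles $N$ times, starting from the $u_N$ of the first step, produces a control $\eta\in C^\infty(J_T,E_0)=C^\infty(J_T,E)$ with $u:=\RR(u_0,h+\eta)$ satisfying $\|u(T)-\varphi(T)\|_k+\lvvvert u-\varphi\rvvvert_{T,k}<\epsilon/2$ and uniformly bounded in $\XX_{T,k}$. A final invocation of Lemma~\ref{L:1.1} on the pair $(u,\varphi)$, which is uniformly bounded in $L^\infty(J_T,C^{1,1/2})$ by Sobolev, converts the relaxation closeness into the flow estimate $\|\phi^u-\phi^\varphi\|_{L^\infty(J_T,C^1)}<\epsilon$, so that all three conditions of Definition~\ref{D:1} are simultaneously satisfied.

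The main obstacle is the Convexification principle: one must approximate the singular contribution $\sum_i B(\zeta^i)$ by rapid oscillations of a genuinely $E_{j-1}$-valued field $\zeta$ and show that the resulting trajectory perturbation is small not only in the weak (relaxation) sense but in a topology strong enough for Theorem~\ref{T:pert} to preserve the $\XX_{T,k}$-bounds. Keeping these bounds stable across all $N$ descent steps — so that the Sobolev control that drives Lemma~\ref{L:1.1} survives the whole scheme and the three-norm estimate of Definition~\ref{D:1} closes at the end — is the delicate quantitative point of the argument.
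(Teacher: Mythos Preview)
Your proposal is correct and follows essentially the same route as the paper: reduce to an $E_N$-valued control via density of $E_\infty$ and Theorem~\ref{T:pert}, then descend $N$ times using the Convexification/Extension mechanism (which the paper packages as Theorem~\ref{T.reduct} and Proposition~\ref{P.2}). The only cosmetic difference is that the paper invokes Lemma~\ref{L:1.1} \emph{inside} each descent step (so that Theorem~\ref{T.reduct} already outputs closeness of the flows), whereas you propagate only $\|\cdot\|_k$ at time $T$ and $\lvvvert\cdot\rvvvert_{T,k}$ through the $N$ steps and apply Lemma~\ref{L:1.1} once at the end; both organisations work provided, as you note, the $\XX_{T,k}$-bounds are kept through the scheme.
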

We have the following two corollaries of this result. 

\begin{corollary}\label{D.3.1} Under the conditions of Theorem \ref{T.2.1},   if    $E 
 $ is saturating in~$ H^{k-1}_\sigma$, then       for any $\e>0$,  $u_0,  u_1 \in
H^{k}_\sigma$, and   $\psi\in \textup{SDiff}(\T^3)$   there is a control $\eta
\in \Theta(h,u_0)\cap C^\ty(J_T,E) $ such that
$$
\|\RR_T(u_0,h+\eta) - u_1 \|_{k}+ \| \phi_T^{\RR(u_0,h+\eta)} - \psi\|_{  C^1}<\e.
$$
\end{corollary}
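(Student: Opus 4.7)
The plan is to derive Corollary \ref{D.3.1} as a direct combination of Proposition \ref{P:1.2} and Theorem \ref{T.2.1}, with the triangle inequality doing essentially all the work. I would first reduce the problem to approximating an explicit smooth Eulerian path whose endpoint is $u_1$ and whose flow at time $T$ is close to $\psi$, and then invoke the path-controllability theorem to track that path with an $E$-valued control.

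\textbf{Step 1: Constructing the target path.} Given $\e>0$, $u_0,u_1\in H^{k}_\sigma$ and $\psi\in\textup{SDiff}(\T^3)$, apply Proposition \ref{P:1.2} (with $d=3$ and with $\e/2$ in place of $\e$; note that $k\ge 3>1+3/2$) to obtain a vector field
\[
\varphi\in C^\infty(J_T, H^{k}_\sigma)
\]
such that $\varphi(0)=u_0$, $\varphi(T)=u_1$, and $\|\phi^{\varphi}_T-\psi\|_{C^1}<\e/2$. In particular $\varphi\in Y_{T,k}$ (the time derivative lies in any $H^{k-1}$-valued $L^2$-space since $\varphi$ is smooth in time with values in $H^k_\sigma$), so $\varphi$ is an admissible target for Definition \ref{D:1}.

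\textbf{Step 2: Tracking the target with an $E$-valued control.} Apply Theorem \ref{T.2.1} to $\varphi$ with the tolerance $\e/2$: there exists $\eta\in\Theta(h,u_0)\cap C^\infty(J_T,E)$ such that
\[
\|\RR_T(u_0,h+\eta)-\varphi(T)\|_k + \lvvvert \RR(u_0,h+\eta)-\varphi\rvvvert_{T,k}+\|\phi^{\RR(u_0,h+\eta)}-\phi^{\varphi}\|_{L^\infty(J_T,C^1)} < \e/2.
\]
Since $\varphi(T)=u_1$, the first term bounds $\|\RR_T(u_0,h+\eta)-u_1\|_k$ by $\e/2$. For the flow piece, the triangle inequality gives
\[
\|\phi^{\RR(u_0,h+\eta)}_T-\psi\|_{C^1} \le \|\phi^{\RR(u_0,h+\eta)}_T-\phi^{\varphi}_T\|_{C^1} + \|\phi^{\varphi}_T-\psi\|_{C^1} < \e/2+\e/2 = \e.
\]
Adding the two displayed bounds yields the desired estimate, which completes the proof.

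\textbf{Expected difficulty.} Under the assumptions already proved in the paper, this corollary is essentially a bookkeeping exercise: the real substance — both the existence of a smooth $\varphi$ sending the flow near $\psi$ (Proposition \ref{P:1.2}) and the simultaneous path-and-endpoint approximation in the relaxation and $C^1$-flow norms (Theorem \ref{T.2.1}) — has already been established. The only point requiring a sentence of care is the verification that $\varphi\in Y_{T,k}$ so that Definition \ref{D:1} is applicable, but this is immediate from the $C^\infty(J_T,H^k_\sigma)$ regularity delivered by Proposition \ref{P:1.2}.
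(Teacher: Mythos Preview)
Your proof is correct and follows essentially the same approach as the paper: construct a smooth path $\varphi$ via Proposition~\ref{P:1.2}, then apply Theorem~\ref{T.2.1} to track it and conclude by the triangle inequality. Your version is slightly more careful with the $\e/2$ bookkeeping and with the verification that $\varphi\in Y_{T,k}$, both of which the paper leaves implicit.
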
  
Let us denote by $\textup{VPM}(\T^3)$ the set of all volume-preserving mappings from~$\T^3$ to $\T^3$. 
According\footnote{ The result of \cite{MR1956851} is stated for a cube, but it remains valid also in the case of a torus.  } to Corollary 1.1 in \cite{MR1956851}, we have that $\textup{VPM}(\T^3)$ is the closure of $\textup{SDiff}(\T^3)$  in $L^p(\T^3)$ for any $p\in [1,+\ty)$. Thus we get the following result.
\begin{corollary}Under the conditions of Theorem \ref{T.2.1},   if    $E 
 $ is saturating in~$ H^{k-1}_\sigma$, then       for any $\e>0$,    $p\in [1,+\ty)$,     $u_0,  u_1 \in
H^{k}_\sigma$, and     $\psi\in \textup{VPM}(\T^3)$  there is a control $\eta
\in \Theta(h,u_0)\cap C^\ty(J_T,E) $ such that 
$$
\|\RR_T(u_0,h+\eta) - u_1 \|_{k}+ \| \phi_T^{\RR(u_0,h+\eta)} - \psi\|_{  L^p}<\e.
$$

\end{corollary}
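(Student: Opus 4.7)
The plan is to deduce the statement from Corollary~\ref{D.3.1} by using the density result cited from~\cite{MR1956851}, which asserts that $\textup{SDiff}(\T^3)$ is dense in $\textup{VPM}(\T^3)$ with respect to the $L^p$ topology for every $p\in[1,+\infty)$. In other words, the $\textup{VPM}$-version of the corollary will be obtained from the $\textup{SDiff}$-version by a two-step approximation: first approximate the target $\psi$ by a diffeomorphism $\tilde\psi$ in $L^p$, then approximate $\tilde\psi$ by $\phi_T^{\RR(u_0,h+\eta)}$ in $C^1$ (and \emph{a fortiori} in $L^p$) using Corollary~\ref{D.3.1}.

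More precisely, fix $\e>0$, $p\in[1,+\infty)$, $u_0,u_1\in H^k_\sigma$, and $\psi\in\textup{VPM}(\T^3)$. The first step is to invoke the cited density result to produce a $\tilde\psi\in\textup{SDiff}(\T^3)$ with
$$
\|\tilde\psi-\psi\|_{L^p(\T^3)}<\e/4.
$$
The second step is to observe that, since $\T^3$ is compact with bounded intrinsic distance, the identity embedding yields a constant $C=C(p)>0$ such that
$$
\|\phi_1-\phi_2\|_{L^p(\T^3)}\le C\,\|\phi_1-\phi_2\|_{C^0(\T^3)}\le C\,\|\phi_1-\phi_2\|_{C^1(\T^3)}
$$
for any continuous maps $\phi_1,\phi_2:\T^3\to\T^3$. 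The third step is to apply Corollary~\ref{D.3.1} to the target $\tilde\psi\in\textup{SDiff}(\T^3)$ with accuracy parameter $\e':=\min\{\e/2,\,\e/(4C)\}$, obtaining a control $\eta\in\Theta(h,u_0)\cap C^\infty(J_T,E)$ satisfying
$$
\|\RR_T(u_0,h+\eta)-u_1\|_k+\|\phi_T^{\RR(u_0,h+\eta)}-\tilde\psi\|_{C^1}<\e'.
$$

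Combining these bounds by the triangle inequality gives
$$
\|\phi_T^{\RR(u_0,h+\eta)}-\psi\|_{L^p}\le C\,\|\phi_T^{\RR(u_0,h+\eta)}-\tilde\psi\|_{C^1}+\|\tilde\psi-\psi\|_{L^p}<\e/4+\e/4=\e/2,
$$
while the $H^k$-bound $\|\RR_T(u_0,h+\eta)-u_1\|_k<\e'\le\e/2$ is already provided by Corollary~\ref{D.3.1}. Summing the two contributions yields the desired inequality. There is no substantive obstacle in this argument: both inputs (Corollary~\ref{D.3.1} and the Shnirelman-type density of $\textup{SDiff}(\T^3)$ in $\textup{VPM}(\T^3)$) have been placed at our disposal, and the only delicate point is the bookkeeping in choosing $\e'$ to absorb the $C^1\hookrightarrow L^p$ embedding constant.
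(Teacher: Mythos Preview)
Your proposal is correct and follows exactly the same approach as the paper: the paper simply notes that by the cited result of Brenier--Gangbo, $\textup{SDiff}(\T^3)$ is $L^p$-dense in $\textup{VPM}(\T^3)$, and declares the corollary to follow from Corollary~\ref{D.3.1}. Your write-up merely spells out the triangle-inequality bookkeeping that the paper leaves implicit.
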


The rest of this subsection is devoted to the proofs of Theorem \ref{T.2.1} and Corollary \ref{D.3.1}.
They  are   based on the following result which is proved in Section~\ref{S:reduct}.
\begin{theorem}\label{T.reduct}  Assume   that~$E$ is  an arbitrary finite-dimensional subspace of~$H_\sigma^{k+1}$ and    $h\in L^2(J_T,H^{k-1}_\sigma)$.  Then for any $\e>0$, $u_0\in H^k_\sigma$, and $\eta_1\in \Theta(h,u_0)\cap L^2(J_T, E_1)$
    there is   $\eta \in \Theta(h,u_0)\cap C^\infty(J_T, E )$ such that
  \begin{align*}
\|\RR_T(u_0,h+\eta_1) -& \RR_T(u_0,h+\eta) \|_{k}+  \lvvvert  {\RR(u_0,h+\eta_1)} -  {\RR(u_0,h+\eta)}\lvvvert_{T, k }\\ +&\|\phi^{\RR(u_0,h+\eta_1) }- \phi^{\RR(u_0,h+\eta) } \|_{L^\ty(J_T,C^1)}<\e.     
\end{align*}

\end{theorem}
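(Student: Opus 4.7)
The plan is to execute the Agrachev--Sarychev convexification--extension scheme in one step, going from $E_1 = \FF(E)$ down to $E$. By the stability in Theorem~\ref{T:pert}, the continuity \eqref{E:contin}, and Lemma~\ref{L:1.1}, the map $\eta_1 \mapsto \bigl(\RR(u_0,h+\eta_1), \phi^{\RR(u_0,h+\eta_1)}\bigr)$ is continuous, so by density it suffices to treat piecewise-constant $\eta_1 \in \Theta(h,u_0)\cap L^2(J_T, E_1)$. Using a basis $\{e_j\}$ of $E_1$ together with the definition of $\FF(E)$, and fixing decompositions of both $e_j$ and $-e_j$ in the form \eqref{E2:sahmnf}, we obtain on each sub-interval a decomposition
\[
\eta_1 = \eta^0 - \sum_{i=1}^p B(\zeta^i), \qquad \eta^0,\zeta^1,\dots,\zeta^p \in E,
\]
with $p$ independent of the sub-interval.

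\textbf{Convexification.} For large $N$, define $\zeta^N\in C^\infty(J_T,E)$ by $\zeta^N(t) = \sqrt{2p}\,\cos(Nt)\sum_{i=1}^p \chi_i^N(t)\,\zeta^i$ on each sub-interval, where $\chi_i^N$ is a smooth partition of unity selecting the $i$-th block of a fine sub-partition, with small cutoffs enforcing $\zeta^N(0) = \zeta^N(T) = 0$. One checks: (a) $\|\zeta^N\|_{L^\infty(J_T, H^{k+1})}$ is uniformly bounded in $N$; (b) $\lvvvert \zeta^N \rvvvert_{T, H^{k+1}} = O(1/N)$; (c) for each $t\in J_T$, $\int_0^t B(\zeta^N(s))\dd s \to \int_0^t \sum_i B(\zeta^i(s))\dd s$ in $H^k$ uniformly in $t$, because $2p\cos^2(Ns) = p + p\cos(2Ns)$ and the oscillatory remainder has integral $O(1/N)$. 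Let $v^N$ solve the auxiliary equation
\[
\dot v + L(v+\zeta^N) + B(v+\zeta^N) = h + \eta^0, \qquad v(0)=u_0,
\]
whose existence and uniform $\XX_{T,k}$-bound follow from Theorem~\ref{T:pert} thanks to (a). Expanding $B(v^N+\zeta^N) = B(v^N) + \tilde B(v^N,\zeta^N) + B(\zeta^N)$, the evolution equation for the difference $w^N := v^N - \RR(u_0,h+\eta_1)$ takes the form of \eqref{E1:pert5} with source term whose time-integral is a sum of contributions from $L\zeta^N$, $\tilde B(v^N,\zeta^N)$, and $B(\zeta^N)-\sum_i B(\zeta^i)$. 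The first is $O(1/N)$ by (b); the third vanishes by (c); the cross-term is handled by an integration by parts in $t$, which combined with the uniform $\XX_{T,k}$-bound on $v^N$ and the resulting $L^2(J_T, H^{k-1})$-bound on $\partial_t v^N$ converts the $O(1)$ sup-norm of $\zeta^N$ into an $O(1/N)$ gain. A Gronwall argument along the lines of the proof of Theorem~\ref{T:pert} then yields
\[
\|v^N(T) - \RR_T(u_0,h+\eta_1)\|_k + \lvvvert v^N - \RR(u_0,h+\eta_1) \rvvvert_{T,k} \longrightarrow 0.
\]

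\textbf{Extension and flows.} Set $u^N := v^N + \zeta^N$. A direct substitution shows that $u^N$ solves $\dot u + Lu + B(u) = h + \eta^N$, $u(0)=u_0$, with $\eta^N := \eta^0 + \dot\zeta^N \in C^\infty(J_T, E)$, so $\eta^N \in \Theta(h,u_0)\cap C^\infty(J_T,E)$. Since $\zeta^N(T)=0$ and $\lvvvert \zeta^N \rvvvert_{T,k} = O(1/N)$, the previous display gives
\[
\|u^N(T) - \RR_T(u_0,h+\eta_1)\|_k + \lvvvert u^N - \RR(u_0,h+\eta_1) \rvvvert_{T,k} \longrightarrow 0.
\]
By the Sobolev embedding $H^k\hookrightarrow C^{1,1/2}$ for $k\ge 3$ on $\T^3$, both $u^N$ and $\RR(u_0,h+\eta_1)$ are uniformly bounded in $L^\infty(J_T,C^{1,1/2})$. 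Lemma~\ref{L:1.1} then upgrades the above relaxation-norm closeness to $L^\infty(J_T,C^1)$-closeness of the corresponding flows. For $N$ sufficiently large, $\eta := \eta^N$ satisfies all three requirements. The main obstacle is the convexification step, specifically controlling the cross-term $\tilde B(v^N,\zeta^N)$ in $\XX_{T,k}$ despite $\zeta^N$ being only small in relaxation norm; this is the delicate time-integration-by-parts argument outlined above, and is the analogue for Lagrangian trajectories of the corresponding step in \cite{shirikyan-cmp2006}.
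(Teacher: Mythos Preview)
Your overall architecture matches the paper's: reduce to piecewise constant $\eta_1$, convexify by introducing a fast-oscillating $E$-valued $\zeta$ into the auxiliary system \eqref{E2.conv}, then extend by the substitution $u=v+\zeta$ with $\eta=\eta^0+\dot\zeta$. The extension step and the passage to flows via Lemma~\ref{L:1.1} are correctly identified.

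The gap is in the convexification step. You establish that the source term $g^N$ in the equation for $w^N=v^N-\RR(u_0,h+\eta_1)$ has small \emph{time-integral} (relaxation norm), and then assert that ``a Gronwall argument along the lines of the proof of Theorem~\ref{T:pert}'' gives $w^N\to 0$. This does not follow: the Gronwall argument in Theorem~\ref{T:pert} requires the source to be small in $L^2(J_T,H^{k-1})$, which is strictly stronger than smallness of its primitive. If one tries to rescue the energy estimate by integrating $(g^N,L^kw^N)$ by parts in time, the term $(G^N,L^k\dot w^N)$ produces $L^{k+1}w^N$ via $\dot w^N=-Lw^N+\cdots$, and one is missing either a derivative on $G^N$ or on $w^N$. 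Likewise, your IBP on the cross-term produces $B(Z^N,\partial_s v^N)$ with $\partial_s v^N$ only in $L^2(J_T,H^{k-1})$; the resulting object lies in $H^{k-2}$, not $H^{k-1}$, so you cannot feed it back into the $H^{k-1}$-source framework of Theorem~\ref{T:pert}.

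The paper closes this gap by a different mechanism (Proposition~\ref{P.2}): it writes $u_1$ itself as a solution of the auxiliary equation with an oscillatory error $f_n$, and sets $v_n=u_1-Kf_n$ where $K$ is the Duhamel operator for the Stokes semigroup. The point is that parabolic smoothing plus interpolation give $\|Kf_n\|_{L^\infty(J_T,H^k)}\to 0$ (not merely its relaxation norm), so that $v_n$ solves a perturbation of the auxiliary equation with a source that \emph{is} small in $L^2(J_T,H^{k-1})$, and Theorem~\ref{T:pert} applies directly. This Duhamel step is the missing ingredient in your argument; without it, relaxation-norm smallness of the source does not propagate to strong-norm smallness of the solution.
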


\begin{proof}  [Proof of Theorem \ref{T.2.1}]
Let us take any $\e>0, \de>0$, and $\varphi\in Y_{T,k}$.   Then $$\eta_0:=\dot \varphi +L\varphi+B(\varphi)-h$$ belongs to $\Theta(u_0, h)$ and $\varphi(t)=\RR_t(u_0,h+\eta_0)$ for any $t\in J_T$, where $u_0=\varphi(0)$. Since $E_\infty$ is dense in $H^{k-1}_\sigma$, we have that
$$ \|P_{E_N} \eta_{ 0} - \eta_{ 0}\|_{L^2(J_T,H^{k-1})} \rightarrow 0 \,\, \text{as}\,\,N \rightarrow \infty,$$
where $P_{E_N}$ is the orthogonal projection onto $E_N$ in $H$.
By Theorem \ref{T:pert}, for  sufficiently large    $N$, we have   $P_{E_N} \eta_{ 0} \in \Theta(h,u_0)$ and 
$$
 \|  {\RR(u_0,h+P_{E_N} \eta_{ _0} )} - \varphi \|_{ X_{T,k}}<\de.
$$
By  \eqref{E:contin}, we can choose    
   $\de>0$ so small that   
$$
\|\phi^{\RR(u_0,h+ P_{E_N}\eta_0 ) }- \phi^{\varphi} \|_{L^\ty(J_T,C^1)}<\e.
$$
  Applying $N$
times Theorem \ref{T.reduct}, we complete the proof of Theorem
\ref{T.2.1}.

 \end{proof}

\begin{proof}  [Proof of Corollary \ref{D.3.1}]
Let us take any $\e>0 $,  $\psi \in \textup{SDiff}(\T^3)$, and  $ u_0,u_1\in  H^{k}_\sigma$. By Proposition \ref{P:1.2},    there is a vector field   $u\in C^\ty(J_T, H^{k}_\sigma )$ such that $u(0)=u_{0}, u(T)=u_{1}$, and 
\begin{equation}\label{eezza}
 \|\phi^{u}_T-\psi\|_{C^1}<\e.
 \end{equation} Applying Theorem \ref{T.2.1},   we find a control $\eta
\in \Theta(h,u_0)\cap C^\ty(J_T,E) $ such that~\eqref{uxxum123} holds  with $\varphi=u$. In particular, 
 
$$
\|\RR_T(u_0,h+\eta) - u(T) \|_{k}+ \| \phi^{\RR(u_0,h+\eta)}_T-\phi^{u}_T\|_{C^1}<\e.
$$ Combining this with \eqref{eezza}, we get the required result.    
  
 \end{proof}

\section{Proof Theorem \ref{T.reduct}} \label{S:reduct}

The proof follows the arguments  of 
 \cite{AS-2005, AS-2006}, and \cite{shirikyan-cmp2006}. We   consider  the following system
\begin{align}
\dot{u}+ L (u+\zeta)+B(u+\zeta)&=h+\eta \label{E2.conv}
\end{align}
  with two $E$-valued controls    $\eta, \zeta$.
 We denote by  $\hat{\Theta}(u_0,h)$ the set of  
$(\eta, \zeta)\in L^2(J_T,H_\sigma^{k-1})\times
L^4(J_T,H_\sigma^{k+1})$ for which problem (\ref{E2.conv}),
(\ref{ic}) has a   solution in~$\XX_{T,k}$.
    Theorem \ref{T.reduct}  is deduced  from the following   proposition which is proved at the end of this section (cf. Proposition 3.2 in \cite{shirikyan-cmp2006}).
     \begin{proposition}\label{P.2}
    For any  $\eta_1 \in   \Theta(u_0,h) \cap L^2(J_T,E_1)$, there is a sequence $(\eta_n,\zeta_ n) \in \hat  \Theta(u_0,h) \cap  C^\infty(J_T,E\times
E)$ such that 
\begin{align}\label{E:ppp}
&\| \RR(u_0,0,h+\eta_1)- \RR(u_0, \zeta_n,h+\eta_n)\|_{ L^\ty(J_T, H^k)}+  \lvvvert  \zeta_n \rvvvert_{T,k} \! \to \!0    \,\,\text{as $n\!\to\!\ty$,}\\ &
\sup_{n\ge1}(\|  \RR(u_0,\zeta_n, h+\eta_n )\|_{\XX_{T,k}} \!+\!\| \zeta_n\|_{L^\ty(J_T ,H^{k+1})}+\| \eta_n\|_{L^2(J_T ,H^{k-1})}) < \infty.\label{sahman}
 \end{align}
\end{proposition}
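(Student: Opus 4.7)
The plan is to implement the Agrachev--Sarychev convexification argument in the spirit of \cite{AS-2005,shirikyan-cmp2006}, adapted to the 3D setting with regularity $k \ge 3$.

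\emph{Step 1 (decomposition of the target control).} Since $\eta_1(t) \in E_1 = \FF(E)$ for a.e.\ $t$ and $E_1$ is finite-dimensional, I would fix a basis of $E_1$, each element represented as $\eta - \sum_i B(\zeta^i)$ with $\eta, \zeta^i \in E$, and then use the bilinearity of $B$ together with the identity $|c|\, B(\tilde\zeta) = B(\sqrt{|c|}\,\tilde\zeta)$ (and the fact that $\FF(E)$ is a vector space, so a representation is available for vectors of either sign) to produce a pointwise decomposition
$$\eta_1(t) = \eta^{(0)}(t) - \sum_{i=1}^{p} B\bigl(\zeta^{(i)}(t)\bigr),$$
with $\eta^{(0)} \in L^2(J_T, E)$ and $\zeta^{(i)} \in L^4(J_T, E)$, the exponent $4$ coming from coefficients of the form $\sqrt{|c_j(t)|}$ with $c_j \in L^2$. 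A further mollification in time reduces to the case where $\eta^{(0)}$ and $\zeta^{(i)}$ are smooth, the resulting error being absorbed via Theorem~\ref{T:pert}.

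\emph{Step 2 (oscillating controls).} Divide $J_T$ into $n$ equal blocks, each further split into $p$ sub-intervals $I_j^i$ of length $T/(pn)$. Define
$$\zeta_n(t) := \sqrt{p}\, s_j^i\, \zeta^{(i)}(t) \text{ on } I_j^i, \qquad \eta_n(t) := \eta^{(0)}(t),$$
with signs $s_j^i \in \{+1,-1\}$ chosen so that $\sum_{j' \le j} s_{j'}^i$ stays bounded uniformly in $j$. A short computation gives $\lvvvert \zeta_n \rvvvert_{T,k} = O(1/n)$, and since $(s_j^i)^2 = 1$, on $I_j^i$ one has $B(\zeta_n) = p\,B(\zeta^{(i)})$, so the primitive of $B(\zeta_n) - \sum_i B(\zeta^{(i)})$ is also $O(1/n)$ in $H^{k-1}$. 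Mollification near partition points yields $C^\infty$-controls without harming these bounds, and trivially $\|\zeta_n\|_{L^\infty(J_T, H^{k+1})} \le \sqrt{p}\,\max_i \|\zeta^{(i)}\|_{L^\infty H^{k+1}}$ and $\|\eta_n\|_{L^2 H^{k-1}} = \|\eta^{(0)}\|_{L^2 H^{k-1}}$.

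\emph{Step 3 (uniform bound and convergence).} Set $u := \RR(u_0, 0, h+\eta_1)$ and let $u_n$ denote the solution of \eqref{E2.conv} with controls $(\zeta_n, \eta_n)$, which exists on a maximal interval by the local theory. Rewrite \eqref{E2.conv} as $\dot u_n + Lu_n + B(u_n) = g_n$ with
$$g_n = h + \eta_n - L\zeta_n - B(\zeta_n) - \tilde B(u_n, \zeta_n), \qquad \tilde B(u,v) := B(u,v)+B(v,u),$$
and pair with $L^k u_n$. The terms $L\zeta_n$ and $B(\zeta_n) - \sum_i B(\zeta^{(i)})$ are handled by integration by parts in time, trading their non-small $L^2(H^{k-1})$ norm for a small relaxation norm paired against $\p_t(L^k u_n)$, which is controlled through the equation by $\|u_n\|_{\XX_{T,k}}$ itself. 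The cross term $\tilde B(u_n,\zeta_n)$ is estimated by \eqref{E1:B1}--\eqref{E1:B2} using the uniform $L^\infty(H^{k+1})$ bound on $\zeta_n$, and partially absorbed into the viscous contribution. A Gronwall closure then yields global existence on $J_T$ with uniform $\|u_n\|_{\XX_{T,k}} \le C$, giving \eqref{sahman}; the same mechanism applied to $w_n := u_n - u$ produces $\|w_n\|_{L^\infty H^k} \to 0$, proving \eqref{E:ppp}.

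\emph{Main obstacle.} The hardest point is the uniform estimate in Step~3: Theorem~\ref{T:pert} does \emph{not} apply around $(\hat u, \hat \zeta) = (u,0)$ because $\|\zeta_n\|_{L^4 H^{k+1}}$ is bounded below, not small. One must therefore re-run the energy estimate by hand, exploiting the smallness of $\zeta_n$ in the \emph{relaxation} sense rather than in a strong norm, and organising the time integration by parts so that all error terms are bounded by $\lvvvert \zeta_n \rvvvert_{T,k-1}$ (which vanishes) times a quantity controlled by the very $\XX_{T,k}$-bound one is trying to establish — the Gronwall-type bootstrap required to close this estimate is the real technical core of the proposition.
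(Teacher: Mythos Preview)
Your outline is a plausible route, but it diverges from the paper's proof in a way that makes your ``main obstacle'' self-imposed. The paper first reduces to \emph{constant} $\eta_1$ by approximation with piecewise constant controls, so no time-dependent decomposition with $L^4$ coefficients is needed. More importantly, the paper does \emph{not} attack the uniform $\XX_{T,k}$ bound by a direct energy estimate with integration by parts in time. Instead it introduces the Duhamel operator $Kf(t)=\int_0^t e^{-(t-s)L}f(s)\,\dd s$ and the auxiliary function $v_n:=u_1-Kf_n$, where $f_n$ is the oscillating discrepancy in \eqref{E3.fnitesq}. The point is that $v_n$ is an \emph{exact} solution of the $\zeta$-equation \eqref{E2.conv} with the \emph{same} $\zeta_n$ and a right-hand side $h+\eta+g_n$ with $\|g_n\|_{L^2H^{k-1}}\to 0$; hence Theorem~\ref{T:pert} applies directly around $(\hat u,\hat\zeta,\hat g)=(v_n,\zeta_n,h+\eta+g_n)$, the constants being uniform because $\|\zeta_n\|_{L^4H^{k+1}}$ and $\|v_n\|_{\XX_{T,k}}$ are uniformly bounded. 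The convergence $\|Kf_n\|_{L^\infty H^k}\to 0$ is obtained by interpolating a uniform $H^{k+1/2}$ bound (from the smoothing estimate $\|L^{3/4}e^{-tL}\|\le Ct^{-3/4}$) against the $H^1$ convergence already proved in \cite{shirikyan-cmp2006}.

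So the paper sidesteps entirely the bootstrap you flag: by comparing $\RR(u_0,\zeta_n,h+\eta)$ not with $u_1$ but with $v_n$ --- which carries the \emph{same} large $\zeta_n$ --- the smallness hypothesis of Theorem~\ref{T:pert} is met for free. Your relaxation-norm integration-by-parts scheme could likely be pushed through, but it re-derives by hand what the Duhamel trick plus Theorem~\ref{T:pert} give in two lines; the price is exactly the delicate Gronwall closure you identify, whereas the paper's argument has no such closure to perform.
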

\begin{proof}[Proof of Theorem \ref{T.reduct}] Let us take any      $u_0\in H^k_\sigma$ and $\eta_1\in \Theta(h,u_0)\cap L^2(J_T, E_1)$, and let     $(\eta_n,\zeta_ n) \in \hat  \Theta(u_0,h) \cap  C^\infty(J_T,E\times
E)$ be  any sequence satisfying \eqref{E:ppp} and  \eqref{sahman}. 
 Let    $\hat \zeta_n \in C^\infty(J_T,E)$  be   such that $\hat\zeta_n(0)=\hat\zeta_n(T)=0$ and   
\begin{align}
&\|\zeta_n-\hat\zeta_n\|_{L^4(J_T,H^{k+1})} \rightarrow 0 \,\, \text {as} \,\,
n\rightarrow \infty,\label{E:deficz}\\ & \sup_{n\ge1}\| \hat\zeta_n\|_{L^\ty(J_T ,H^{k+1})} < +\infty.\label{ddfa}
\end{align}  Then \eqref{E:ppp} and \eqref{E:deficz} imply that
\begin{align}\label{etaqdrf}
   \lvvvert \hat \zeta_ n \lvvvert_{ T,  k} &\le   \lvvvert \hat \zeta_ n-\zeta_ n \lvvvert_{ T,  k}+  \lvvvert   \zeta_ n \lvvvert_{ T,  k}  \nonumber \\&\le \int_0^T \|\hat \zeta_ n(s)-\zeta_ n(s)\|_k\dd s+  \lvvvert   \zeta_ n \lvvvert_{ T,  k}   \to 0  \quad \textup{as $n\to \ty$.} 
  \end{align}
By  Theorem \ref{T:pert} and \eqref{sahman},    for sufficiently large $n\ge1$, we have  $(\eta_n, \hat \zeta_ n) \in \hat  \Theta(u_0,h)$  and
  \begin{align}\label{Eqq2}
\| \RR(u_0, \zeta_n,h+\eta_n)-\RR(u_0, \hat \zeta_n,h+  \eta_n)\|_{\XX_{T,k}}  \to 0  \quad\text {as $n\to\ty$.
}   \end{align} 
Note that    
\begin{align}\label{Eqq221}
 \RR_t(u_0, \hat \zeta_n,h+  \eta_n)&=\RR_t(u_0,   0  , h+  \hat \eta_n)-\hat \zeta_n(t)  \quad \text{for $t\in J_T,$}\\
 \RR_T(u_0, \hat \zeta_n,h+  \eta_n)&=\RR_T(u_0,   0  , h+  \hat \eta_n),\label{Eqq221yeur}
\end{align} where
 $\hat \eta_n:= \eta+\p_t 
{\hat \zeta}_n$.   From \eqref{E:ppp}, \eqref{Eqq2}, and \eqref{Eqq221yeur} it follows  that 
$$
\| \RR_T(u_0,0, h+\eta_1)-\RR_T(u_0,0, h+ \hat \eta_n)\|_{k}  \to 0  \quad\text {as $n\to\ty$.
}   $$  
Using  \eqref{E:ppp}, \eqref{etaqdrf}-\eqref{Eqq221},  we obtain   
  \begin{align*}    \lvvvert   {\RR(u_0, 0, h+\eta_1)} -  \RR(u_0, &  0  ,  h+\hat\eta_n)\lvvvert_{ T,  k}\\&\le  T \|    {\RR(u_0,0, h+\eta_1)} -  {\RR(u_0, \zeta_n, h+ \eta_n)}\|_{  L^\ty(J_T, H^k)}\\& \quad +T\| \RR(u_0, \zeta_n, h+ \eta_n)-\RR(u_0, \hat\zeta_n, h+ \eta_n)\|_{\XX_{T,k}}\\&\quad\, +\lvvvert   {\RR(u_0, \hat \zeta_n, h+ \eta_n)} -  {\RR(u_0,0, h+\hat\eta_n)}\lvvvert_{ T,  k}\\&
 \to 0     \quad\text{as $n\to \ty$}.
\end{align*} Combining this with the embedding $H^3\subset C^{1,1/2}$, \eqref{sahman}, \eqref{Eqq221}, and   applying Lemma \ref{L:1.1} with $\la=1/2$, we get that 
$$   \|\phi^{\RR(u_0, 0,h+\eta_1) }- \phi^{\RR(u_0,0, h+\hat \eta_n) } \|_{L^\ty(J_T,C^1)}\to 0     \quad\text{as $n\to \ty$}.
$$
This completes the proof of Theorem \ref{T.reduct}. 
 \end{proof}

 \begin{proof}[Proof of Proposition \ref{P.2}] {\it Step~1}.  Without loss of generality, we can assume that $\eta_1\in \Theta(u_0,h)\cap  E_1$ is constant. Indeed, the general case  is then obtained by       approximation in $L^2(J_T,H^{k-1}_\sigma)$  by   piecewise constant controls with finite number of intervals of constancy and   successive applications of    the result on the intervals of constancy.

\smallskip
 By the definition of $\FF(E)$, for any $\eta_1\in E_1$, there
are  vectors $\xi^1,\ldots ,\xi^p, \eta \in E$   such that $$
\eta_1=\eta-\sum_{i=1}^p B(\xi^i).$$ Choosing    $m = 2p$ and
$$
 \zeta^i := -\zeta^{i+p} := \frac{1}{\sqrt{2}}\xi^i, \quad i = 1,
\ldots ,p,
$$ it is easy to see that  
 \begin{equation} \label{isver}
B(u)-\eta_1=\frac1m \sum_{j=1}^m\left(  B(u+\zeta^j) + L \zeta^j\right)-\eta\quad
\text{for any }u \in H^1_\sigma. 
\end{equation}
 Then  $u_1:=\RR(u_0,0,h+\eta_1)\in \XX_{T,k}$   satisfies the following  equation
\begin{equation}\label{E3.aftleem1}
\dot{u}_1+Lu_1+\frac1m\sum_{j=1}^m    \left( B(u+\zeta^j)+ L\zeta^j\right)=h(t)+ \eta.
\end{equation}
   Let us define 
$\zeta_n(t)=\zeta(\frac {nt}{T})$, where
$\zeta(t)$  is a $1$-periodic function such that
$$\zeta(s)=\zeta^j\text{ for }  s\in \left[ {(j-1)}/{m},  j/m\right), j=1, \ldots, m.$$ 
 Equation  (\ref{E3.aftleem1}) is
equivalent to  
\begin{equation}
\dot u_1  +L(u_1+\zeta_n) + B(u_1 + \zeta_n)  = h(t) + \eta 
+ f_n(t),\nonumber
\end{equation}
where
\begin{align}\label{E3.fnitesq}
f_n(t):=& L\zeta_n +B(u_1 + \zeta_n) -\frac1m\sum_{j=1}^m     \left( B(u_1+\zeta^j)+ L\zeta^j\right)  .
\end{align}
For any $f\in L^2(J_T, H)$, let us set 
$$
  {K} f(t)=\int_0^t e^{-(t-s)L}f(s)\dd s.
$$ It is easy to check that 
\begin{equation}\label{erttttr}
\textup{$K$ is continuous from $ L^2(J_T, H^{p-1}_\sigma)$ to  $ \XX_{T,p}$  for any $p\ge1$,}
\end{equation}  and $v_n = u_1 - K f_n $ is a solution of the problem
\begin{align}\label{eq_vn}
\dot v_n +L (v_n +\zeta_n)+B (v_n + \zeta_n + K f_n)  & = h(t)
+ \eta , \\
v_n&=u_0.\nonumber
 \end{align}
 
\smallskip
{\it Step~2}. Let us show that 
\begin{equation}\label{E3.fk}
   \|K f_n \|_{ L^\ty(J_T,H^k)}\rightarrow 0 \quad \text{as $n\to\ty$}.
\end{equation}
Indeed, the definition of $\zeta_n$ gives  that 
\begin{equation}
\sup_{n\ge1} \| \zeta_n\|_{L^\ty(J_T ,H^{k+1})}  < \infty.\label{dkjfkcsf}
\end{equation}
 Combining this with \eqref{E3.fnitesq}, \eqref{E1:B1},  and the fact that   $u_1 \in \XX_{T,k}$, we get   
\begin{equation}\label{mmpae}
\sup_{n\ge1}\| f_n\|_{L^\ty(J_T ,H^{k-1})} < \infty.
\end{equation}
This implies  that 
\begin{align}\label{eeza} 
\sup_{n\ge1}  \| K f_n\|_{L^\ty(J_T,H^{k+1/2}_\sigma)} & \le  C\sup_{n\ge1, t\in [0,T]} \int_0^t \|L^{3/4} e^{-(t-s)L}\|_{\LL(H)} \|f_n(s)\|_{k-1}\dd s\nonumber\\&\le C _1\sup_{n\ge1, t\in [0,T]} \int_0^t (t-s)^{-3/4} \|f_n(s)\|_{k-1}\dd s\nonumber\\&\le C_2\sup_{n\ge1}  \|f_n\|_{L^\ty(J_T, H^{k-1})}  <\ty,
\end{align}where we used the inequality\footnote{This inequality is proved with the help of a decomposition in the
eigenbasis  $\{e_j\}$ of $L$:
$$
\|L^{r} e^{-t L}u\|^2=\sum_{j=1}^\ty\alpha_j^{2r}e^{-2t\alpha_j} u_j^2 \le C_{r} t^{-r} \|u\|^2,
$$where $u_j:=\lag u,e_j\rag$ and $\alpha_j$ is the eigenvalue corresponding to $e_j$.}
$$
\|L^{r} e^{-t L}\|_{\LL(H)}\le C_{r} t^{-r} \quad\text{for any $r\ge0, t>0$}.
$$  In Step 4 of the proof of Proposition 3.2 in \cite{shirikyan-cmp2006}, it is established  that 
$$
   \|K f_n \|_{L^\ty(J_T,H^{1})}\rightarrow 0.
$$Using this with  \eqref{eeza} and an interpolation inequality, we get \eqref{E3.fk}. Combining~\eqref{erttttr} with \eqref{mmpae}, we obtain also that
\begin{equation}\label{eertfsqa}
  \sup_{n\ge1} \|K f_n \|_{\XX_{T,k} }< \ty.
\end{equation}
  
\smallskip
{\it Step~3}. Equation \eqref{eq_vn} can be rewritten as
\begin{align}\label{eq_vnaas}
\dot v_n +L (v_n +\zeta_n)+B (v_n + \zeta_n)  & = h(t)
+ \eta +g_n(t),
 \end{align}
 where
 $$
 g_n(t):=-(B (v_n + \zeta_n,  K f_n) +B (  K f_n, v_n + \zeta_n) +B (  K f_n) ).
 $$From \eqref{E3.fk}, \eqref{sahman}, and  \eqref{E1:B1} it is easy to deduce  that $\|g_n\|_{L^2(J_T, H^{k-1})}\to 0$ as~$n\to\ty$.  From \eqref{eertfsqa} it follows that 
$$
  \sup_{n\ge1} \|v_n \|_{\XX_{T,k} }<\ty.
$$
  Therefore,   by
 Theorem \ref{T:pert} and \eqref{dkjfkcsf},   we have $(\eta, \zeta_n)\in \hat  \Theta(u_0,h)$ for sufficiently large $n\ge1$ and     
$$
  \|\RR(u_0,\zeta_n,\eta )-v_n \|_{\XX_{T,k}}\rightarrow 0 \,\, \text{as} \,\,
  n\rightarrow \infty.
$$   
On the other hand, by \eqref{E3.fk},       
$$
  \|v_n-u_1\|_{L^\ty(J_T,H^k)  }\rightarrow 0 \,\, \text{as} \,\,
  n\rightarrow \infty,
$$whence 
\begin{align*}
&  \|\RR(u_0,\zeta_n,\eta )-u_1 \|_{L^\ty(J_T,H^k)}\rightarrow 0 \,\, \text{as} \,\,
  n\rightarrow \infty,\\
   & \sup_{n\ge1} \|\RR(u_0,\zeta_n,\eta ) \|_{\XX_{T,k} }<+\ty.
\end{align*}

\smallskip
{\it Step~4}. 
Let us   show that 
\begin{equation}\label{suja}
\lvvvert \zeta_ n \lvvvert_{ T,  k} \to 0    \quad\text{ as $n\to\ty$.}
\end{equation}We  set $\LL \zeta_n (t) :=\int_0^t  \zeta_ n(s) \dd s$. It suffices to check that  
\begin{enumerate}
\item[(i)] the sequence $\LL  \zeta_ n$ is relatively compact in $C(J_T, H^k_\sigma)$.
\item[(ii)] for any $t\in J_T$, $\LL\zeta_n(t)\to 0$ in $H^k_\sigma$ as $n\to\ty$.  
\end{enumerate} 
To prove the first assertion,  we use the Arzel\`{a}--Ascoli theorem. 
 The functions~$\zeta_n$ are piecewise constant and  the set $\zeta_n(t), t \in J_T$ is contained in a finite
subset of~$H_\sigma^{k+1}$ not depending on $n$. This implies that  there is a
compact set $F \subset H_\sigma^{k+1}$ such that
$$ \LL \zeta_n(t) \in F \text{ for all }t \in J_T ,  n \ge 1.$$
From \eqref{sahman} it follows that 
the sequence $ \LL \zeta_n $ is uniformly equicontinuous on $J_T$.
Thus, by the Arzel\`{a}--Ascoli theorem, $ \LL \zeta_n $ is
relatively compact in $C(J_T, H^k_\sigma)$. 

\smallskip
Let us prove (ii).    Let $t=t_l+\tau$, where
$t_l=\frac{lT}{n}$, $l\in \N$ and $\tau\in [0,\frac{T}{n})$. In view of the construction of 
  $\zeta_n$, we have that $\LL \zeta _n(lT/n)=0$. Combining this with \eqref{sahman}, we get   
$$
\LL \zeta _n(t)= \int_{\frac{lT}{n}}^{t}\zeta _n(s)\dd s\to 0,
$$which completes the proof of \eqref{suja}.

\smallskip

Finally, taking  an arbitrary sequence $\hat \zeta_n\in C^\ty (J_T, E)$ such that $$\|\zeta_n-\hat \zeta_n\|_{L^\ty(J_T,E)}\to 0 \text{ as $n\to \ty$},$$ and using Theorem \ref{T:pert}, we see that the conclusions of Proposition \ref{P.2} hold  for the sequence $(\eta, \hat\zeta_n)\in C^\ty(J_T,  E\times E)$.

  \end{proof}

\section{Examples of saturating spaces}

In this section, we provide three   types of    examples of saturating spaces  which ensure  the controllability of the 3D  NS system in the sense of Definition \ref{D:1}.   

\subsubsection{Saturating spaces associated with   the  generators of $\Z^3$}\label{type1}

 Let us first introduce some notation.   Denote by $ \Z^3_*$ the set of non-zero integer vectors $\ell=(l_1,l_2,l_3)\in\Z^3$.
 For any~$\ell\in \Z^3_*$, let us define the functions
\begin{equation}\label{E.cmsmer}
c_\ell(x) = l(\ell) \cos\langle \ell, x \rangle, \,s_\ell(x) = l(\ell)
\sin\langle \ell, x \rangle,
\end{equation}
where    $\{l(\ell),l(-\ell)\}$  is an arbitrary orthonormal basis in 
$$\ell^\bot:=\{x\in \R^3: \langle x, \ell \rangle=0\}.$$ Then $c_\ell$ and $s_\ell$ are eigenfunctions of $L$ and  the family $\{c_\ell, s_\ell\}_{\ell\in \Z^3_*}$ is an orthonormal basis in $H$. Let $c_0=s_0=0$. For any  subset $\KK\subset \Z^3$, we denote    
\begin{equation}\label{E:EEE}
E(\KK):= \textup{span} \{ c_\ell, c_{-\ell}, s_\ell, s_{-\ell}: \ell\in \KK\}.
\end{equation}When $\KK$ is finite,  the spaces  $E_j(\KK)$ and $E_\ty(\KK)$  are defined  by \eqref{Esahm} with   $E=E(\KK)$. We denote by $\Z^3_\KK$ the set of all vectors $a\in \Z^3$ which can be represented as  finite linear combination of elements of $\KK$ with integer coefficients. 
We shall say that   $\KK\subset\Z^3$ is a {\it generator}  if  $\Z^3_\KK= \Z^3$.
\smallskip
The following theorem provides  a characterisation of saturating spaces of the form  \eqref{E:EEE}.  
\begin{theorem}\label{crit}
For any  finite set  $\KK\subset \Z^3$, we have the equality  
 \begin{equation}\label{E:generato}
E(\Z^3_\KK )= E_\ty(\KK).
\end{equation}
 Moreover, $E(\KK)$ is saturating in $H$ if and only if $\KK$ is a generator of $\Z^3$. If~$E(\KK)$ is  saturating  in $H$,  then it is saturating in  $H^k_\sigma$ for any $k\ge0$.  
\end{theorem}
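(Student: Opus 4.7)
The plan is to prove the identity \eqref{E:generato} first, and then deduce both the saturation criterion in $H$ and the extension to $H^k_\sigma$ from it. For the inclusion $E_\ty(\KK)\subset E(\Z^3_\KK)$, I would induct on $j\ge 0$, showing $E_j(\KK)\subset E(\Z^3_\KK)$. The case $j=0$ is immediate since $\KK\subset\Z^3_\KK$. The inductive step rests on the product-to-sum trigonometric identities, which imply that for any additively closed subset $\SSS\subset\Z^3$ the bilinear form $B$ maps $E(\SSS)\times E(\SSS)$ into $E(\SSS)$; since $\Z^3_\KK$ is a subgroup of $\Z^3$, each $B(\zeta^i)$ appearing in the definition of $\FF(E_{j-1}(\KK))$ lies in $E(\Z^3_\KK)$, and so does any expression of the form $\eta-\sum_i B(\zeta^i)$.

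The reverse inclusion $E(\Z^3_\KK)\subset E_\ty(\KK)$ is the heart of the matter. I would argue by induction on the minimal length $|\ell|_\KK$ of a $\Z$-linear combination of elements of $\KK$ representing a given $\ell\in\Z^3_\KK$, showing that $c_\ell$ and $s_\ell$ belong to some $E_j(\KK)$. The base $\ell\in\{0\}\cup\KK\cup(-\KK)$ is trivial. For the inductive step, I would pick a decomposition $\ell=m\pm n$ with $m,n\in\Z^3_\KK$ of strictly smaller length. Computing $B(\alpha c_m+\beta s_m+\gamma c_n+\delta s_n)$ via the product-to-sum identities and the Leray projector~$\Pi$ produces a sum of modes supported at frequencies $0, \pm 2m, \pm 2n, \pm(m+n), \pm(m-n)$, with the off-diagonal contributions at $\pm(m\pm n)$ being bilinear in the pairs $(\alpha,\beta)$ and $(\gamma,\delta)$. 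By polarisation, and using the inductive hypothesis at the strictly shorter frequencies $0, \pm 2m, \pm 2n$ and at $\pm(m-n)$ (or, alternatively, at $\pm(m+n)$ when one starts from the other decomposition), one isolates the cross contribution at $\pm\ell$. A short geometric case analysis handles the configurations in which the projector onto $(m\pm n)^\bot$ annihilates the available direction vectors $l(\pm n)$; in such cases one simply swaps to another decomposition of $\ell$ into elements of $\Z^3_\KK$ to guarantee that both $c_\ell$ and $s_\ell$, spanning the full four-dimensional subspace of modes at frequency $\pm\ell$, are produced.

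Once \eqref{E:generato} is in hand, the saturation characterisation is immediate: since $\{c_\ell,s_\ell\}_{\ell\in\Z^3_*}$ is an orthonormal basis of $H$, density of $E(\Z^3_\KK)$ in $H$ is equivalent to $\Z^3_\KK=\Z^3$, i.e., to $\KK$ being a generator. For the extension to $H^k_\sigma$, when $\KK$ is a generator, $E_\ty(\KK)$ contains every basis vector $c_\ell,s_\ell$; since these are simultaneous eigenfunctions of $L$ with eigenvalues $|\ell|^2$, truncation of the Fourier expansion at high modes yields, for any $u\in H^k_\sigma$, a sequence in $E_\ty(\KK)$ converging to $u$ in $H^k$, proving saturation in $H^k_\sigma$ for every $k\ge 0$. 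The main obstacle I foresee is the sign restriction built into the definition of $\FF$: only terms $-B(\zeta^i)$ (never $+B(\zeta^i)$) are permitted, so the naive polarisation $\tilde B(u,v)=B(u+v)-B(u)-B(v)$ cannot be applied directly. One must first arrange that the diagonal squares $B(c_m),B(s_m),B(c_n),B(s_n)$ already lie in a previous generation $E_{j-1}$ — which is what forces the induction to be on weighted length rather than on~$j$ itself — and only then can the cross term $\tilde B$ be realised as an element of $E_j$ modulo $E_{j-1}$.
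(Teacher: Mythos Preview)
Your strategy for the inclusion $E(\Z^3_\KK)\subset E_\infty(\KK)$ has a genuine gap: the ``swap to another decomposition'' escape hatch is not always available. Take $\KK=\{(1,0,0),(0,1,0),(0,0,1)\}$ and $\ell=(1,1,0)$. The only way to write $\ell=m\pm n$ with $m,n\in\Z^3_\KK$ of strictly smaller $\KK$-length is $m=(1,0,0)$, $n=(0,1,0)$. From modes at $m$ and $n$, the directions reachable at frequency $m+n$ via quadratic interactions $B(\zeta)$ are exactly
\[
P_{m+n}\bigl\{\langle a,n\rangle b+\langle b,m\rangle a:\ a\in m^\bot,\ b\in n^\bot\bigr\},
\]
and a direct computation shows this set equals $P_{(1,1,0)}\,\textup{span}\{(0,0,1),(1,1,0)\}=\R(0,0,1)$. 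You therefore never produce the second direction $(1,-1,0)$ in $(1,1,0)^\bot$, so $\aA_\ell$ is not obtained by this route, and there is no alternative short decomposition to swap to.

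The paper overcomes this obstruction by a different mechanism (Proposition~\ref{ppo}): rather than staying at frequencies of length at most $|\ell|_\KK$, one deliberately climbs to a \emph{longer} frequency first. Using a third linearly independent vector $r\in\KK$, one shows that the full two-dimensional space of directions is reachable at $m+n+r$ (because $P_{m+n+r}$ restricted to the relevant plane is onto, cf.\ \eqref{m+n} and \eqref{U:a15}), and then descends from $m+n+r$ and $r$ back to $m+n$ along the direction $\de(m+n+r,r)$, which is linearly independent of $\de(m,n)$. This detour through a higher frequency is invisible to a minimal-length induction. A secondary issue: your appeal to the inductive hypothesis at the frequency $m-n$ is also unjustified, since $|m-n|_\KK$ need not be smaller than $|m+n|_\KK$ (in the example above both equal $2$); the paper sidesteps this by the symmetrisation \eqref{cru1}, summing two values of $B(\cdot)$ so that the $m-n$ contributions cancel outright --- a manoeuvre that is compatible with the sign restriction in the definition of $\FF$ that you correctly flagged. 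Finally, your worry about the diagonal squares is unnecessary: for any $\zeta$ supported at a single frequency $m$ with coefficient in $m^\bot$ one has $B(\zeta)=0$, so those terms vanish identically.
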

In  \cite{MR2032128}  a similar result is conjectured in the case of finite-dimensional approximations of the 3D NS system and a proof is given for the saturating property of $E(\KK)$ when    $\KK=\{(1,0,0), (0,1,0), (0,0,1)\}$\footnote{In that case  one has   $\dim E(\KK)=12$. In Proposition \ref{lsdfavt},  we give an example of a 6-dimensional saturating space.}.  A 2D  version of  Theorem~\ref{crit} is established in     \cite{EM01} and   \cite{HM-2006}. In that case,   the set $\KK$ is   a generator of~$\Z^2$  containing  at least two vectors with different Euclidian norms (the reader is referred to the original papers for the exact statement). The proof in the 3D  case, as well as the statement of the result,  differ essentially from the 2D case.

\smallskip
    In view of   Theorem~\ref{crit}, the following simple criterion is useful  for constructing saturating spaces     (see Section 3.7 in \cite{MR780184}).   
\begin{theorem} \label{gij} A set  $\KK\subset \Z^3$ is a generator if and
only if the greatest common divisor of the set $\{\det(a,b,c): a,b,c\in \KK\}$ is $1$, where $\det(a,b,c)$ is the determinant of the 
matrix with rows $a,b$ and $c$.  
\end{theorem}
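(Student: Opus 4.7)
The plan is to reduce Theorem~\ref{gij} to a classical fact about finitely generated subgroups of $\Z^3$ via the Smith normal form: the index $[\Z^3 : \Z^3_\KK]$ equals the greatest common divisor of the $3\times 3$ minors of any integer matrix whose columns generate $\Z^3_\KK$. Once this is in hand, $\KK$ is a generator iff the index is $1$ iff the GCD is $1$.

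I would first dispose of the easy direction by a multilinearity computation. Suppose $\KK$ generates $\Z^3$. Each standard basis vector admits a representation $e_j = \sum_{i} c_{ji} v_i$ with $v_i \in \KK$ and $c_{ji} \in \Z$, so the multilinear expansion
$$1 \;=\; \det(e_1, e_2, e_3) \;=\; \sum_{i_1, i_2, i_3} c_{1, i_1} c_{2, i_2} c_{3, i_3} \, \det(v_{i_1}, v_{i_2}, v_{i_3})$$
exhibits $1$ as an integer linear combination of determinants of triples from $\KK$; hence any common divisor of these determinants divides $1$, and the GCD is $1$.

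For the converse I would argue as follows. Choose a finite subset of $\KK$ large enough that its $\Z$-span is still $\Z^3_\KK$, and form the integer matrix $M$ of size $3 \times n$ whose columns are its elements. Observe first that if no triple in $\KK$ is linearly independent then every $3\times 3$ minor of $M$ vanishes and the GCD is not $1$; so we may assume $M$ has rank $3$. By the Smith normal form there exist unimodular matrices $U \in \mathrm{GL}_3(\Z)$ and $V \in \mathrm{GL}_n(\Z)$ such that $UMV$ is a diagonal matrix (padded by zero columns) with positive integer entries $d_1 \mid d_2 \mid d_3$ on the diagonal. Since $U$ and $V$ are unimodular, the image of $M\colon \Z^n \to \Z^3$ is carried bijectively by $U$ onto $d_1\Z \oplus d_2\Z \oplus d_3\Z$, so $[\Z^3 : \Z^3_\KK] = d_1 d_2 d_3$. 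The classical invariance of determinantal divisors under unimodular row and column operations then identifies $d_1 d_2 d_3$ with the GCD of the $3\times 3$ minors of $M$, which is precisely $\gcd \{\det(a,b,c) : a,b,c \in \KK\}$. The hypothesis that this GCD equals $1$ therefore forces $d_1 = d_2 = d_3 = 1$, i.e.\ $\Z^3_\KK = \Z^3$.

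No real obstacle is expected: the argument is entirely standard and is essentially the content of the criterion from Section~3.7 of \cite{MR780184} which the excerpt already cites. The only mild point to watch is the rank hypothesis on $M$, but, as noted above, this is automatically enforced by the assumption $\gcd = 1$.
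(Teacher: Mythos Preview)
Your argument is correct. The paper does not actually supply a proof of Theorem~\ref{gij}; it simply states the result and refers the reader to Section~3.7 of Jacobson~\cite{MR780184}. Your Smith-normal-form computation is the standard route to this criterion and is precisely what that reference contains, so there is nothing to compare.
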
 
 
 The proof of Theorem \ref{crit} is deduced from the following   auxiliary result. 
   \begin{proposition}\label{ppo} Assume that   $\WW\subset \Z^3$  is   a finite set containing  a linearly independent family $\{p,q,r\}\subset \Z^3$.  Then   for any  non-parallel vectors $m,n\in \WW$ we have    $\aA_{m\pm n}, \BB_{m\pm n} \subset  E_3(\WW)$, where 
 $$
 \aA_\ell:=\textup{span}\{c_\ell,c_{-\ell}\}, \quad\BB_\ell:=\textup{span}\{s_\ell,s_{-\ell}\}, \quad \ell\in \Z^3_*.
 $$
 \end{proposition}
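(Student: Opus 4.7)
The plan is to work in Fourier variables and exploit a special structural feature of the Navier--Stokes nonlinearity. The key initial observation is that $B$ vanishes on any divergence-free trigonometric polynomial supported in a single frequency pair $\{m,-m\}$: for $\zeta \in \aA_m\oplus\BB_m$, each contribution to $(\zeta\cdot\nabla)\zeta$ is proportional to some $\langle l(\pm m),\pm m\rangle = 0$. In particular $B(c_\ell)=B(s_\ell)=0$ for $\ell\in\WW$. Consequently, for $\zeta_1 \in \aA_m\oplus\BB_m$ and $\zeta_2 \in \aA_n\oplus\BB_n$ with $m,n\in\WW$ non-parallel, the polarization identity collapses to
\[
B(\zeta_1\pm\zeta_2) = \pm\bigl[B(\zeta_1,\zeta_2)+B(\zeta_2,\zeta_1)\bigr],
\]
so that $B(\zeta_1+\zeta_2) = -B(\zeta_1-\zeta_2)$. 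Writing $v := B(\zeta_1,\zeta_2)+B(\zeta_2,\zeta_1)$, this gives $v = 0 - B(\zeta_1-\zeta_2)$ and $-v = 0 - B(\zeta_1+\zeta_2)$; both expressions have the form $\eta-\sum_i B(\zeta^i)$ with $\eta=0$ and $\zeta^i\in E_0 := E(\WW)$, so (since $\FF(E_0)$ is a vector subspace) the vector $v$ belongs to $E_1 = \FF(E_0)$.

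A direct product-to-sum computation shows $v$ is a trigonometric polynomial supported on $\pm(m+n)\cup\pm(m-n)$. At $\mu := m+n$ its coefficient is, up to a nonzero scalar and a trigonometric factor,
\[
V(u_m,u_n) := \Pi_{\mu^\perp}\bigl[\langle u_m,\mu\rangle\,u_n + \langle u_n,\mu\rangle\,u_m\bigr] \in \mu^\perp,
\]
where $u_m\in m^\perp$, $u_n\in n^\perp$ are the directional parts of $\zeta_1,\zeta_2$, and the $\aA_\mu$-type versus $\BB_\mu$-type contribution is selected by the cos/sin parities of the two factors. Because $E(\WW)$ contains the complete basis $c_{\pm m}, s_{\pm m}, c_{\pm n}, s_{\pm n}$, the vectors $u_m, u_n$ range freely over $m^\perp\times n^\perp$, and both cos- and sin-type components in $\mu^\perp$ arise in $E_1$.

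The next step is to verify that the bilinear map $(u_m,u_n)\mapsto V(u_m,u_n)$ surjects onto the two-dimensional plane $\mu^\perp$. A short rank computation shows this holds whenever neither $\mu\perp m$ nor $\mu\perp n$; in this generic regime one immediately concludes $\aA_{m+n}\oplus\BB_{m+n}\subset E_1\subset E_3$, and by identical reasoning $\aA_{m-n}\oplus\BB_{m-n}\subset E_1\subset E_3$.

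The hard part will be the degenerate geometries in which $\mu\perp m$ or $\mu\perp n$ and the image of $V$ collapses to a single line in $\mu^\perp$. To overcome these I would invoke the hypothesis that $\WW$ contains the linearly independent triple $\{p,q,r\}$: since $m,n$ span only a plane, some $k\in\{p,q,r\}$ lies outside $\mathrm{span}\{m,n\}$, and an elementary case check shows that at least one of the auxiliary pairs $(m,k),(n,k),(m-n,k)$ avoids the analogous perpendicularity conditions. The argument of the previous paragraphs then delivers $\aA_{m\pm k}\oplus\BB_{m\pm k}$ and $\aA_{n\pm k}\oplus\BB_{n\pm k}$ inside $E_1$. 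Repeating the bilinear construction at level $E_2=\FF(E_1)$ on pairs such as $(m+k,n-k)$ — whose sum is exactly $\mu$ — injects the auxiliary direction $k$ into the inner products appearing in $V$, restoring full rank and producing the direction in $\mu^\perp$ that was missed at $E_1$. A final application of $\FF$ at $E_3$ absorbs any remaining configurations. The delicate part is the bookkeeping, i.e.\ checking case-by-case that the triple $\{p,q,r\}$ provides enough geometric freedom to reach every direction in $\mu^\perp$ within three applications of $\FF$; but this is precisely what the linear independence hypothesis is there to ensure.
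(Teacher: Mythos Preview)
Your opening observations are correct: $B$ vanishes on single modes, the polarization identity delivers elements of $E_1$, and the Fourier coefficient at $\mu=m+n$ is (up to normalisation) $V(u_m,u_n)=P_\mu\bigl(\langle u_m,n\rangle\,u_n+\langle u_n,m\rangle\,u_m\bigr)$ with $u_m\in m^\perp$, $u_n\in n^\perp$. The gap is in your surjectivity criterion for $V$.

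You assert that $V$ surjects onto $\mu^\perp$ whenever $\mu\not\perp m$ and $\mu\not\perp n$. This is false. Expanding $\langle\,\cdot\,,m-n\rangle$ and using $u_m\perp m$, $u_n\perp n$ shows that $\langle u_m,n\rangle u_n+\langle u_n,m\rangle u_m$ always lies in $(m-n)^\perp$; hence the image of $V$ is contained in $P_\mu\bigl((m-n)^\perp\bigr)$, which equals $\mu^\perp$ if and only if $m+n\notin(m-n)^\perp$, i.e.\ $|m|\neq|n|$. For $m=(1,0,0)$, $n=(0,1,0)$ one has $\mu=(1,1,0)$ with $\mu\not\perp m$ and $\mu\not\perp n$, yet the image of $V$ is only the line $\mathrm{span}\{(0,0,1)\}$. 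So your ``short rank computation'' cannot have succeeded.

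This collapses the subsequent case analysis. The genuine obstruction is $|m|=|n|$, and linear independence of $\{p,q,r\}$ gives no information about lengths: for $\WW=\{(1,0,0),(0,1,0),(0,0,1)\}$ every pair in $\WW$ has equal norm, so your plan to pick $k\in\{p,q,r\}$ and get $\aA_{m\pm k},\aA_{n\pm k}\subset E_1$ by the ``generic'' argument fails for every choice of $k$.

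The paper proceeds differently. It never claims two dimensions at level $E_1$; from the pair $(m,n)$ it extracts only the single direction $\delta(m,n)\in m^\perp\cap n^\perp$ at frequency $m\pm n$ (exactly the line seen above). The independent vector $r\in\WW$ is then used constructively: one passes to the auxiliary frequency $m+n+r$ and obtains the \emph{full} plane $\aA_{m+n+r}\subset E_2(\WW)$, the point being that here the relevant input at frequency $m+n$ is the fixed vector $\delta(m,n)$ and the nonvanishing $\langle\delta(m,n),r\rangle\neq 0$ (equivalent to $r\notin\mathrm{span}\{m,n\}$) forces the analogue of $V$ to have two-dimensional image. One then returns to $m+n=(m+n+r)-r$ at level $E_3$ via the pair $(m+n+r,r)$, producing at frequency $m+n$ the second direction $\delta(m+n+r,r)$; this cannot be parallel to $\delta(m,n)$ (else it would be orthogonal to all of $m,n,r$), so the two together span $\aA_{m+n}$.
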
 
 \begin{proof}[Proof of Proposition \ref{ppo}]   We shall confine ourselves to the proof of the inclusion
  \begin{equation}\label{U:a10}
 \aA_{m+n}\subset E_3(\WW).
 \end{equation} The other conclusions in  the proposition are  checked  in the same way. 

\smallskip

{\it Step~1}. We shall write   $m\nparallel n$ when the vectors $m,n \in \R^3$ are non-parallel.  For any   $m,n\in \WW$ such that  $m\nparallel n$, let us denote by $ \de:=\de(m,n)$ one of the two unit vectors belonging to $m^\bot \cap n^\bot$.   In this step we   show that 
        \begin{equation}\label{uxxahay}
    \de \cos \lag m\pm n,x\rag  , \de \sin \lag m\pm n,x\rag\in  E_1(\WW).
    \end{equation}
 Indeed, for any $a\in \R^3_*$, let us denote by $P_a$ the orthogonal projection in $\R^3$ onto   $a^\bot$. Then we have 
$$
\Pi (a \cos\lag l,x \rag)= (P_l a) \cos\lag l,x \rag, \quad \Pi (a \sin\lag l,x \rag)= (P_l a) \sin\lag l,x \rag
$$for any   $l\in \Z^3_*$.
Combining this with some trigonometric identities and the definition of $B$, one gets that 
\begin{align} 
2B(a \cos \lag m,x\rag+ b\sin \lag n,x\rag)&= \cos\lag m-n,x \rag P_{m-n} \left(\lag a,n\rag b- \lag b,m\rag a \right)\nonumber\\&\quad +\cos\lag m+n,x \rag P_{m+n} \left(\lag a,n\rag b+ \lag b,m\rag a \right),\label{anh1}
\end{align}for any $a\in m^\bot$ and $b\in n^\bot$ 
(see Step 1 of the proof of Proposition 2.8 in~\cite{shirikyan-cmp2006}). This implies that
\begin{align} 
2B(b \cos \lag n,x\rag+ a\sin \lag m,x\rag)&=- \cos\lag m-n,x \rag P_{m-n} \left(\lag a,n\rag b- \lag b,m\rag a \right)\nonumber\\&\quad +\cos\lag m+n,x \rag P_{m+n} \left(\lag a,n\rag b+ \lag b,m\rag a \right).\label{anhz1}
\end{align}
Taking the sum of \eqref{anh1} and \eqref{anhz1}, we obtain that 
\begin{align}\label{cru1} \cos\lag m+n,x \rag P_{m+n} \left(\lag a,n\rag b+ \lag b,m\rag a \right)&= 
 B(a \cos \lag m,x\rag+ b\sin \lag n,x\rag)\nonumber  \\&\quad +
 B(b \cos \lag n,x\rag+ a\sin \lag m,x\rag).
\end{align} 
Let us fix any   $\la\in \R$ and choose in this equality $a=\de$ and~$\lag b, m\rag=\la$. This choice is possible since $m\nparallel n$. Then we have 
$$
\la \de\cos\lag m+n,x \rag
=B( \de \cos \lag m,x\rag+  b\sin \lag n,x\rag)+ B( b \cos \lag n,x\rag+  \de\sin \lag  m,x\rag).
$$Since $\la\in \R$ is arbitrary, 
from  the definition of $E_1(\WW)$ we get that   $\de\cos\lag m+n,x \rag \in E_1(\WW)$. To prove that $\de\cos\lag m-n,x \rag \in E_1(\WW)$,  it suffices to replace $b$ by $-b$   in \eqref{anhz1}, take the sum of the resulting equality  with \eqref{anh1}:
\begin{align}\label{cru2} \cos\lag m-n,x \rag P_{m-n} \left(\lag a,n\rag b- \lag b,m\rag a \right)&= 
 B(a \cos \lag m,x\rag+ b\sin \lag n,x\rag)\nonumber  \\&\quad +
 B(-b \cos \lag n,x\rag+ a\sin \lag m,x\rag),
\end{align}  and choose $a=\de$ and~$\lag b, m\rag=-\la$
$$
\la  \de\cos\lag m-n,x \rag
=B( \de \cos \lag m,x\rag+  b\sin \lag n,x\rag)+ B(- b \cos \lag n,x\rag+  \de\sin \lag n,x\rag).
$$
 The fact that $\de\sin\lag m\pm n,x \rag \in E_1(\WW)$ is proved in a similar way using the following identities  
 \begin{align*} 
2B(a \cos \lag m,x\rag+ b\cos \lag n,x\rag)&= \sin\lag m-n,x \rag P_{m-n} \left(\lag a,n\rag b- \lag b,m\rag a \right)\nonumber\\&\quad -\sin\lag m+n,x \rag P_{m+n} \left(\lag a,n\rag b+ \lag b,m\rag a \right),
\\
2B(a \sin \lag m,x\rag+ b\sin \lag n,x\rag)&= \sin\lag m-n,x \rag P_{m-n} \left(\lag a,n\rag b- \lag b,m\rag a \right)\nonumber\\&\quad +\sin\lag m+n,x \rag P_{m+n} \left(\lag a,n\rag b+ \lag b,m\rag a \right).
\end{align*}

 {\it Step~2}.  To prove \eqref{U:a10}, let us take any vector  $r\in \WW$  such that  $\EE:=\{m,n,r\}$  is  a linearly independent family\footnote{
 Note that $\EE$ is not necessarily a generator of $\Z^3$.  For example,
   $m=(2,0,0), n=(0,1,0), r=(0,0,1)$ is a basis in $\R^3$, but  not a generator of $\Z^3$,   since the greatest common divisor in   Theorem~\ref{gij} is equal to $2$.   } in~$\R^3$. This choice is possible, by the conditions of the proposition.  For any $\alpha,\beta,\gamma \in\R$, we shall  write $(\alpha,\beta,\gamma)_\EE$  instead of $\alpha m+\beta n+\gamma r$.  Then we have also that the family  $ \{ (1,1,-1)_\EE$,  $(1,-1,1)_\EE$, $  (-1,1,1)_\EE\}$ is independent, hence
$$
\{0\}=   (1,1,-1)_\EE^\bot  \cap   (1,-1,1)_\EE^\bot \cap   (-1,1,1)_\EE^\bot  .
$$
We are going to prove \eqref{U:a10} under the assumption     
   \begin{equation}\label{m+n}
(1,1,1)_\EE\notin  (1,1,-1)_\EE^\bot.
\end{equation}   The other two cases $(1,1,1)_\EE\notin  (1,-1,1)_\EE^\bot$ and $(1,1,1)_\EE\notin  (-1,1,1)_\EE^\bot$ are similar. As  
$$
(1,1,0)_\EE=(1,0,0)_\EE+(0,1,0)_\EE =m+n,
$$ by \eqref{uxxahay}, we  have    
    \begin{equation}\label{U:a13}
  \de(m,n)\cos\lag (1,1,0)_\EE, x\rag\in E_1(\WW).
 \end{equation}  
Writing 
$$(1,1,1)_\EE=(0,0,1)_\EE+(1,1,0)_\EE
$$and applying \eqref{cru1} and \eqref{U:a13}, we obtain  for any $b\in (0,0,1)_\EE^\bot$   that
\begin{align}\label{E:fff} 
&\cos\lag (1,1,1)_\EE,x  \rag P_{(1,1,1)_\EE} \left(\lag  \de (m,n), (0,0,1)_\EE\rag b+ \lag b,(1,1,0)_\EE\rag  \de (m,n) \right)\nonumber   \\&\quad=
B( \de (m,n) \cos \lag (1,1,0)_\EE,x\rag+ b\sin \lag (0,0,1)_\EE,x\rag)\nonumber\\&\quad\quad +  
B(b \cos \lag  (0,0,1)_\EE,x\rag+  \de (m,n)\sin \lag (1,1,0)_\EE,x\rag)\in E_2(\WW).
\end{align} Let us define the  set $$\GG:=\{\lag  \de (m,n), (0,0,1)_\EE\rag b+ \lag b,(1,1,0)_\EE\rag  \de (m,n): b\in(0,0,1)_\EE^\bot \}.$$ Since      $m,n,r$ are linearly independent, we have $\lag  \de (m,n), (0,0,1)_\EE\rag \neq 0$. Thus~$\GG$ is a two-dimensional subspace of $\R^3$ contained in $(1,1,-1)_\EE^\bot$.  This shows  that 
$\GG=(1,1,-1)_\EE^\bot.$ 
 Assumption \eqref{m+n} implies that  
   the orthogonal projection   $P_{(1,1,1)_\EE} \GG$     coincides with $(1,1,1)_\EE^\bot$, so \eqref{E:fff} proves that
   \begin{equation}\label{U:a15}
   \aA_{(1,1,1)_\EE}\subset E_2(\WW).
   \end{equation} Similarly, one can show that $\BB_{(1,1,1)_\EE}\subset E_2(\WW)$. Finally, writing 
$$
(1,1,0)_\EE=(1,1,1)_\EE-(0,0,1)_\EE  
$$and applying \eqref{uxxahay} and \eqref{U:a13} to the set $\WW_1:=\WW\cup\{(1,1,1)_\EE,(0,0,1)_\EE\}$, we see that 
\begin{align*}
 \de ((1,1,1)_\EE,(0,0,1)_\EE)\cos\lag (1,1,0)_\EE, x\rag&\in E_1(\WW_1)=\FF(E(\WW_1))\\&\subset \FF(E_2(\WW))=E_3(\WW).
\end{align*} Combining this with \eqref{U:a13} and the fact that
  $ \de ((1,1,1)_\EE,(0,0,1)_\EE)\nparallel  \de (m,n)$, we get \eqref{U:a10}.  
  This completes the proof of the proposition.  
  \end{proof} 
  \begin{proof}[Proof of Theorem \ref{crit}]   
{\it Step~1}. Let us show that 
\begin{equation}\label{E:generatoA}
E(\Z^3_\KK )\subset E_\ty(\KK).
\end{equation}
 To this end, we    introduce  the sets
 $$
 \KK_0:=\KK, \quad \KK_j=\KK_{j-1}\cup\{m\pm n: m,n\in \KK_{j-1}, m\nparallel n\}, \quad j\ge1.
 $$From Proposition \ref{ppo} it follows that 
 \begin{align}\label{shatkarev}
E(\KK_j) \subset E_3 (\KK_{j-1})&= \FF^3(E(\KK_{j-1}))\subset \FF^{6}(E(\KK_{j-2}))\subset \ldots \subset \FF^{ 3 j}(E(\KK))\nonumber\\&=E_{3j}(\KK).
 \end{align}On the other hand, since $\KK$ is a generator of $\Z^3_\KK$, one   easily checks that $\cup_{j=1}^\ty \KK_j=\Z^3_\KK$. Combining this with \eqref{shatkarev}, we get 
     \eqref{E:generatoA}.

     \smallskip
{\it Step~2}.  Now let us prove that  
\begin{equation}\label{E:generatoB}
 E_\ty(\KK) \subset E(\Z^3_\KK ).
\end{equation} For any $\eta_1\in  E_1(\KK_{j-1})$ and $j\ge1$, there   are vectors $\eta, \zeta^1,\ldots ,\zeta^p\in
E(\KK_{j-1})$   satisfying
the relation
$$
\eta_1=\eta-\sum_{i=1}^p B(\zeta^i).
$$Here we use the following simple lemma. 
\begin{lemma}\label{lemmajan}
For any $j\ge1$, we have
$$
\{B(\zeta): \zeta\in E(\KK_{j-1})\} \subset E(\KK_{j}).
$$
\end{lemma}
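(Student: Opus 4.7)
The plan is to use the bilinearity of $B$ to reduce to pairs of elementary Fourier modes and then invoke the trigonometric identities already derived in Step~1 of the proof of Proposition~\ref{ppo}. First, I would write an arbitrary $\zeta\in E(\KK_{j-1})$ as
$$
\zeta(x)=\sum_{m\in \KK_{j-1}}\bigl(a_m\cos\lag m,x\rag+b_m\sin\lag m,x\rag\bigr),\qquad a_m,b_m\in m^\bot.
$$
Since $B$ is bilinear, $B(\zeta)=\sum_{m,n\in\KK_{j-1}}B(\zeta_m,\zeta_n)$, where $\zeta_m$ denotes the $m$-th summand. It is therefore enough to show that $B(\zeta_m,\zeta_n)+B(\zeta_n,\zeta_m)$ lies in $E(\KK_j)$ for each unordered off-diagonal pair $\{m,n\}\subset \KK_{j-1}$, and that $B(\zeta_m,\zeta_m)\in E(\KK_j)$ on the diagonal.

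For an off-diagonal pair with $m\nparallel n$, identities \eqref{anh1}, \eqref{anhz1}, \eqref{cru1}, \eqref{cru2}, together with the two analogous cos-cos and sin-sin identities stated just after \eqref{cru2}, show that every such contribution expands into a finite sum of trigonometric modes at the frequencies $\pm(m\pm n)$, with amplitude vectors of the form $P_{m\pm n}\bigl(\lag a,n\rag b\pm\lag b,m\rag a\bigr)$. By the very definition of the sequence, $\{m\pm n:m,n\in\KK_{j-1},\ m\nparallel n\}\subset\KK_j$, so both frequencies $m+n$ and $m-n$ lie in $\KK_j$; the projectors $P_{m\pm n}$ place the amplitudes in $(m\pm n)^\bot$. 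Hence each such contribution sits in $\aA_{m+n}\oplus\BB_{m+n}\oplus\aA_{m-n}\oplus\BB_{m-n}\subset E(\KK_j)$.

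For the diagonal terms $B(\zeta_m,\zeta_m)$ and for the off-diagonal pairs with $m\parallel n$, the same identities still apply verbatim, but now the scalars $\lag a_m,n\rag$ and $\lag b_n,m\rag$ both vanish: indeed $a_m\in m^\bot$ and $n\parallel m$ imply $\lag a_m,n\rag=0$, and symmetrically $\lag b_n,m\rag=0$. The amplitude vectors $\lag a,n\rag b\pm\lag b,m\rag a$ are therefore zero and these contributions drop out entirely (in particular the would-be contribution at frequency $2m$ in the diagonal case disappears, while the zero-frequency part is killed by $\Pi$). Summing over all pairs yields $B(\zeta)\in E(\KK_j)$.

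The only delicate point is handling the diagonal and parallel pairs, where naively one might worry about frequencies $\pm 2m$ or $\pm(m\pm n)$ that need not belong to $\KK_j$; but the perpendicularity $a_m,b_m\in m^\bot$ built into the definition of $\aA_\ell$ and $\BB_\ell$ causes the corresponding amplitudes to vanish automatically, so there is no genuine obstacle and the argument is purely a bookkeeping exercise on top of the identities already assembled for Proposition~\ref{ppo}.
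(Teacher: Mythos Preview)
Your proposal is correct and follows essentially the same route as the paper: expand $\zeta$ in the Fourier modes $c_\ell,s_\ell$, use the bilinearity of $B$, and invoke the trigonometric identities from the proof of Proposition~\ref{ppo} to see that each cross term lands in $E(\KK_j)$. The paper's version is very terse---it simply asserts $B(c_m,c_n)\in\textup{span}\{s_{m+n},s_{m-n}\}\subset E(\KK_j)$ and the analogous facts for the other three combinations---whereas you spell out explicitly why the diagonal and parallel contributions vanish (via $a_m\in m^\bot$ and $n\parallel m\Rightarrow\lag a_m,n\rag=0$); this is a genuine clarification, since for parallel $m,n$ the frequency $m+n$ need not belong to $\KK_j$ and it is the vanishing of the amplitude, not membership of the frequency, that saves the day.
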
This lemma implies that
$$
E_1(\KK_{j-1})\subset E(\KK_{j}).
$$Iterating this, we get    
$$
E_j(\KK)\subset E(\KK_{j}),
$$hence 
$$
E_\ty(\KK)=\bigcup_{j=1}^\ty E_j(\KK) \subset  \bigcup_{j=1}^\ty E(\KK_{j}) \subset   E\left( \bigcup_{j=1}^\ty \KK_{j}\right) =E(\Z^3_\KK).
$$This   proves  \eqref{E:generatoB} and \eqref{E:generato}.
     
     \smallskip
{\it Step~3}.  If  $\KK$ is a generator of $\Z^3$, then   \eqref{E:generato} implies  that $E(\KK)$ is saturating in  $H^k_\sigma$ for any $k\ge0$.  

\smallskip

Now let us assume that $\KK$ is not a generator of $\Z^3$, i.e.,  there is $\ell\in \Z^3$   such that $\ell\notin \Z^3_\KK$.  Then it follows from \eqref{E:generato} that $c_\ell$ is orthogonal to $E_\ty(\KK)$ in $H$. This shows that  $E(\KK)$ is not saturating in  $H$ and completes the proof of the theorem.

 \end{proof}

\begin{proof}[Proof of Lemma \ref{lemmajan}] For any $\zeta\in  E(\KK_{j-1})$, we have 
$$
\zeta=\sum_{\ell\in \pm \KK_{j-1}} (a_\ell c_{  \ell} +b_\ell s_{\ell})
$$ for some $a_\ell , b_\ell \in \R$. It follows that 
\begin{align*}
B(\zeta)= &\sum_{m,n\in \pm\KK_{j-1} } ( a_m a_n B(c_m,c_n)+ b_m b_n B(s_m,s_n)\nonumber\\&+a_m b_n B(c_m,s_n)+b_m a_n B(s_m,c_n)).
\end{align*}Using some trigonometric identities, it is easy to verify that 
$$
B(c_m,c_n)\in \textup{span}\{s_{m+n},s_{m-n}\} \subset E(\KK_{j}).
$$ In a similar way, one gets $ B(s_m,s_n),  B(c_m,s_n),  B(s_m,c_n)\in E(\KK_{j}).
$
\end{proof}

  For any finite set $\KK\subset \Z^3$ and $k\ge3$, let us define the space 
  $$
  H^k_{\sigma,\KK}:= \overline {E_\ty(\KK)}^{H^k}.
  $$From   the  structure of the nonlinearity it follows that $ H^k_{\sigma,\KK}$ is invariant for (\ref{2.1}) when 
   $h, \eta\in L^2(J_T,H^{k-1}_{\sigma,\KK})$. Moreover,  $ H^k_{\sigma,\KK}=H^k_{\sigma}$ if and only if    $\KK$ is a generator of $\Z^3$.
 As a corollary   we get the following characterisation of the controllability in $H^k_\sigma$.
\begin{theorem}\label{TTTSD} Let $\KK\subset \Z^3$ be a finite set and $h\in L^2(J_T,H^{k-1}_{\sigma,\KK})$.    Then equation~(\ref{2.1})  is approximately controllable in   the space $ H^k_{\sigma}$ at   time $T$ by controls 
$\eta\in C^\infty(J_T,E(\KK))$  if and only if $\KK$ is a generator of $\Z^3$.
\end{theorem}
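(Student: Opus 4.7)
The proof splits into two implications. The forward direction is essentially a restatement of the earlier results; the reverse direction is an invariance argument.

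\textbf{Sufficiency.} Suppose $\KK$ is a generator of $\Z^3$. By Theorem~\ref{crit}, $E(\KK)$ is saturating in $H^m_\sigma$ for every $m\ge 0$, and in particular in $H^{k-1}_\sigma$. Since $E(\KK)$ is a finite-dimensional subspace of $H^{k+1}_\sigma$ and the assumption $h\in L^2(J_T,H^{k-1}_{\sigma,\KK})\subset L^2(J_T,H^{k-1}_\sigma)$ is available, Theorem~\ref{T.2.1} directly provides approximate controllability at time $T$ in the sense of Definition~\ref{D:1} by controls $\eta\in C^\infty(J_T,E(\KK))$.

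\textbf{Necessity.} Assume $\KK$ is not a generator of $\Z^3$, so that $\Z^3_\KK\subsetneq \Z^3$. Pick any $\ell\in\Z^3\setminus \Z^3_\KK$. Since the family $\{c_m,s_m\}_{m\in\Z^3_*}$ is orthogonal in every $H^k_\sigma$, the vector $c_\ell\in H^k_\sigma$ is orthogonal to $E_\ty(\KK)=E(\Z^3_\KK)$ by Theorem~\ref{crit}, hence $H^k_{\sigma,\KK}$ is a proper closed subspace of $H^k_\sigma$; similarly at level $k-1$. The key step is the following invariance statement, which I would establish first: if $u_0\in H^k_{\sigma,\KK}$, $h\in L^2(J_T,H^{k-1}_{\sigma,\KK})$, and $\eta\in L^2(J_T,E(\KK))\subset L^2(J_T,H^{k-1}_{\sigma,\KK})$, then every solution $u\in\XX_{T,k}$ of \eqref{2.1}, \eqref{2.2} satisfies $u(t)\in H^k_{\sigma,\KK}$ for all $t\in J_T$. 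The Stokes operator trivially preserves this subspace since $Lc_\ell=|\ell|^2 c_\ell$ and $Ls_\ell=|\ell|^2 s_\ell$. For the nonlinearity, the computation already performed in Lemma~\ref{lemmajan} shows that $B(c_m,c_n)$, $B(s_m,s_n)$, $B(c_m,s_n)$, $B(s_m,c_n)$ all lie in $\textup{span}\{c_{m\pm n},s_{m\pm n}\}$, and since $\Z^3_\KK$ is a subgroup of $\Z^3$ we have $m\pm n\in\Z^3_\KK$ whenever $m,n\in\Z^3_\KK$; thus $B$ maps $E_\ty(\KK)\times E_\ty(\KK)$ into $E_\ty(\KK)$. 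Combining this with the bilinear estimate \eqref{E1:B1} and density of $E_\ty(\KK)$ in $H^k_{\sigma,\KK}$ gives $B(u)\in H^{k-1}_{\sigma,\KK}$ for every $u\in H^k_{\sigma,\KK}$. Writing the solution through Duhamel,
\[
u(t)=e^{-tL}u_0+\int_0^t e^{-(t-s)L}\bigl(h(s)+\eta(s)-B(u(s))\bigr)\,\dd s,
\]
and using that $e^{-tL}$ preserves $H^k_{\sigma,\KK}$ (it is diagonal in the $\{c_m,s_m\}$ basis), one concludes $u(t)\in H^k_{\sigma,\KK}$ for every $t\in J_T$.

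With invariance in hand, I would conclude as follows. Fix any $u_1\in H^k_\sigma\setminus H^k_{\sigma,\KK}$; by closedness $d:=\dist_{H^k}(u_1,H^k_{\sigma,\KK})>0$. Set $u_0:=0\in H^k_{\sigma,\KK}$ and choose any smooth $\varphi\in Y_{T,k}$ with $\varphi(0)=0$ and $\varphi(T)=u_1$ (for instance, $\varphi(t):=(t/T)u_1$). For any admissible $\eta\in\Theta(h,u_0)\cap L^2(J_T,E(\KK))$, the invariance lemma forces $\RR_T(u_0,h+\eta)\in H^k_{\sigma,\KK}$, so
\[
\|\RR_T(u_0,h+\eta)-\varphi(T)\|_k=\|\RR_T(u_0,h+\eta)-u_1\|_k\ge d,
\]
which contradicts \eqref{uxxum123} for any $\e<d$. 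Hence approximate controllability fails, completing the proof.

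The only nontrivial step is the invariance lemma, whose sole difficulty is passing from the algebraic closure property on trigonometric polynomials to the full Sobolev statement; this is routine given \eqref{E1:B1} and the density of $E_\ty(\KK)$ in $H^k_{\sigma,\KK}$.
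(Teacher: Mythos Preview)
Your proof is correct and follows exactly the approach the paper indicates: the paper presents the theorem as an immediate corollary of the invariance of $H^k_{\sigma,\KK}$ under \eqref{2.1} (stated just before the theorem, with the same justification via Lemma~\ref{lemmajan}) together with Theorem~\ref{crit} and Theorem~\ref{T.2.1}, without spelling out the details you supply. One small point: for $\varphi(t)=(t/T)u_1$ to lie in $Y_{T,k}=\XX_{T,k}\cap W^{1,2}(J_T,H^{k-1}_\sigma)$ you need $u_1\in H^{k+1}_\sigma$, so simply take $u_1=c_\ell$.
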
 It is also interesting to study the controllability properties of the NS system when $E(\KK)$ given by \eqref{E:EEE} is not saturating (i.e., $\KK$ is not a generator of~$\Z^3$).  
Let us note that   the space  $E(\KK)$ is saturating in   $H^{k}_{\sigma,\KK}$ for any $\KK\subset \Z^3$ and $k\ge0$ (in the sense that $E_\ty(\KK)$ is dense in  $H^{k}_{\sigma,\KK}$).   We have the following refined version of Theorem~\ref{T.2.1}.
  \begin{theorem}\label{TTT.2.1} For any non-empty  finite $\KK\subset \Z^3$   and    $h\in L^2(J_T,H^{k-1}_{\sigma,\KK})$, equation~(\ref{2.1})  is approximately controllable in  the space $ H^k_{\sigma,\KK}$ at   time~$T$ by controls
$\eta\in C^\infty(J_T,E(\KK))$, i.e.,  for any $\e>0$ and       any $$\varphi\in  C(J_T,H^{k}_{\sigma,\KK})\cap L^2(J_T,H_{\sigma,\KK}^{k+1})\cap W^{1,2}(J_T, H^{k-1}_{\sigma,\KK})$$    there is a control $\eta
\in \Theta(h,u_0)\cap C^\ty(J_T,E(\KK)) $ such that
$$
\|\RR_T(u_0,h+\eta) - \varphi(T) \|_{k}+ \lvvvert  {\RR(u_0,h+\eta)} - \varphi \lvvvert_{ T,  k}+\|\phi^{\RR(u_0,h+\eta) }- \phi^{\varphi} \|_{L^\ty(J_T,C^1)}<\e,
$$where $u_0=\varphi(0)$.   

\end{theorem}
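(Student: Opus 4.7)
The plan is to observe that Theorem \ref{TTT.2.1} follows from the same argument as Theorem \ref{T.2.1}, transplanted into the closed invariant subspace $H^k_{\sigma,\KK}$ in place of the full space $H^k_\sigma$. Two facts make this possible: first, $H^k_{\sigma,\KK}$ is invariant under the dynamics of \eqref{2.1} whenever the forcing and the control take values in $H^{k-1}_{\sigma,\KK}$; second, by the very definition $E_\infty(\KK)$ is dense in $H^{k-1}_{\sigma,\KK}$, so $E(\KK)$ plays inside $H^k_{\sigma,\KK}$ the role that a saturating space plays in $H^k_\sigma$.

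First I would establish the invariance. The Stokes operator $L$ is diagonal in the basis $\{c_\ell, s_\ell\}_{\ell\in\Z^3_*}$ and therefore preserves $H^p_{\sigma,\KK}$ for every $p$. For the nonlinearity, the trigonometric computation already used in the proof of Lemma \ref{lemmajan} shows that $B(c_m,c_n)$, $B(c_m,s_n)$, $B(s_m,c_n)$, $B(s_m,s_n)$ all lie in $\textup{span}\{c_{m\pm n},s_{m\pm n}\}\subset E(\Z^3_\KK)$, so $B$ sends $E_\infty(\KK)\times E_\infty(\KK)$ into $E_\infty(\KK)$; the continuity bound \eqref{E1:B1} together with the density of $E_\infty(\KK)$ in $H^k_{\sigma,\KK}$ then extends this to a continuous bilinear map $H^k_{\sigma,\KK}\times H^k_{\sigma,\KK}\to H^{k-1}_{\sigma,\KK}$.

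With invariance in hand, I would retrace the proof of Theorem \ref{T.2.1}. Given $\varphi$ as in the statement and setting $u_0:=\varphi(0)$, the auxiliary control $\eta_0:=\dot\varphi+L\varphi+B(\varphi)-h$ is well defined in $L^2(J_T,H^{k-1}_{\sigma,\KK})$ and produces $\RR_t(u_0,h+\eta_0)=\varphi(t)$. The density of $E_\infty(\KK)$ in $H^{k-1}_{\sigma,\KK}$ gives $\|P_{E_N(\KK)}\eta_0-\eta_0\|_{L^2(J_T,H^{k-1})}\to 0$, so for $N$ large enough Theorem \ref{T:pert} yields a solution $\RR(u_0,h+P_{E_N(\KK)}\eta_0)$ close to $\varphi$ in $\XX_{T,k}$ and hence with flow close to $\phi^\varphi$ in $L^\infty(J_T,C^1)$ via \eqref{E:contin}. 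Finally, $N$ successive applications of Theorem \ref{T.reduct} would reduce the control space from $E_N(\KK)$ down to $E_0(\KK)=E(\KK)$ while controlling, at each step, the endpoint, relaxation-norm and flow errors exactly as in the proof of Theorem \ref{T.2.1}.

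I do not expect any genuine obstacle. Theorem \ref{T.reduct} is stated for an arbitrary finite-dimensional subspace $E\subset H^{k+1}_\sigma$ with $h\in L^2(J_T,H^{k-1}_\sigma)$, both of which are satisfied here, and the auxiliary controls $\zeta,\eta$ it produces lie in $E(\KK_{j-1})\subset H^{k+1}_{\sigma,\KK}$, so the entire induction stays inside the invariant subspace. The only slightly delicate point is the verification that $B$ preserves $H^k_{\sigma,\KK}$, which the paragraph above dispatches by combining Lemma \ref{lemmajan} with continuity.
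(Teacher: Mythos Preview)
Your proposal is correct and follows exactly the approach the paper indicates: the paper itself does not give a proof but simply states that ``the proof of this result literally repeats the arguments of the proof of Theorem~\ref{T.2.1}, so we omit the details,'' and the invariance of $H^k_{\sigma,\KK}$ under the dynamics is already noted just before the statement. Your write-up in fact supplies more detail than the paper does, in particular the justification of invariance via Lemma~\ref{lemmajan} and the continuity of $B$, which is a welcome clarification of what ``literally repeats'' entails.
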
 The proof of this result literally repeats the arguments of the proof of   Theorem \ref{T.2.1}, so we omit the details.

  \subsubsection{Controls  with
  two vanishing components}
In this section,
we consider the NS system 
 \begin{align}
\p_t u-\nu \Delta u+\lag u, \nabla \rag u+\nabla p  &= h(t,x)+ (0,0,1)\eta (t,x), 
\quad  \diver u =0,   \label{0.zz1}\\
u(0)&=u_0,\label{iczzz}
\end{align} 
 where $\eta$ is a control taking values in a finite-dimensional space of the form
 $$
 \HH(\KK):= \textup{span}\{   \cos \lag m,x\rag,  \sin \lag m,x\rag: m\in\KK  \},  $$  where $\KK$ is a subset of $  \Z^3$, and $h $  is  a given smooth  divergence-free  function.  Let us rewrite~\eqref{0.zz1} in an equivalent form 
 \begin{align}
\dot u-\nu \Delta u+B(u )   &=h(t,x) +   \tilde \eta (t,x),\label{vanish}
\end{align}where $\tilde \eta :=\Pi(e\eta)$ and  $e:=(0,0,1)$. Then 
    the control $\tilde \eta$ takes   values in the space
    \begin{equation}\label{tilde}
   \tilde E (\KK):=\textup{span}\{   (P_{m}e)\cos \lag m,x\rag, (P_m e)\sin \lag m,x\rag: m\in\KK  \}.
 \end{equation}For an appropriate choice of $\KK$, this space is saturating. 
    \begin{proposition}\label{lavt}
  Let 
 \begin{equation}\label{tildeKK}
  \KK:=\{(1,0,0), (0,1,0), (1,0,1), (0,1,1) \}.
 \end{equation}Then $\tilde E (\KK)$ is an $8$-dimensional  saturating  space   in $H^{k}_\sigma$ for any $k\ge0$.
   \end{proposition}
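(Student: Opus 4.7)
First I would verify the basic structure. Computing the explicit projections, since $e\perp m_1$ and $e\perp m_2$ one has $P_{m_1}e=P_{m_2}e=e$, while $P_{m_3}e=e-\tfrac12 m_3=(-\tfrac12,0,\tfrac12)$ and $P_{m_4}e=e-\tfrac12 m_4=(0,-\tfrac12,\tfrac12)$, all nonzero. The eight functions $(P_{m_i}e)\cos\langle m_i,x\rangle$ and $(P_{m_i}e)\sin\langle m_i,x\rangle$ for $i=1,\ldots,4$ lie at four distinct frequencies and are mutually orthogonal in $H$, giving $\dim\tilde E(\KK)=8$. Also $\det(m_1,m_2,m_3)=1$, so by Theorem~\ref{gij} the set $\KK$ itself is a generator of $\Z^3$.

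The main task is saturation. My strategy is to show that, after a bounded number of iterations of $\FF$, the space $\tilde E(\KK)_j$ contains $E(\KK')$ for some finite $\KK'\subset\Z^3$ that is still a generator of $\Z^3$; once this holds, Theorem~\ref{crit} implies $E(\KK')_\infty$ is dense in $H^k_\sigma$, hence so is $\tilde E(\KK)_\infty$. The natural candidate is $\KK':=\KK\cup\{e_3\}$ where $e_3=(0,0,1)$, since $\det(m_1,m_2,e_3)=1$.

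The central step is to extract modes at the new frequency $\pm e_3=\pm(m_3-m_1)=\pm(m_4-m_2)$. Using the identity~\eqref{anh1} and its $\sin$-counterparts from the proof of Proposition~\ref{ppo}, I would compute $\tilde B(\alpha,\beta)$ for pairs such as $\alpha=e\cos\langle m_1,x\rangle$ and $\beta=P_{m_3}e\sin\langle m_3,x\rangle$; these expressions, accessible as $B(\alpha+\beta)-B(\alpha-\beta)\in\FF(\tilde E(\KK))$, decompose as a mode at $m_1-m_3=-e_3$ plus a mode at $m_1+m_3=(2,0,1)$. A direct computation gives the amplitude $(-\tfrac12,0,0)$ at $-e_3$, which lies in $e_3^\perp$. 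The analogous computation with $(m_2,m_4)$ in place of $(m_1,m_3)$ yields the amplitude $(0,-\tfrac12,0)$ at $-e_3$. These two vectors span the full transverse plane $e_3^\perp\cap\R^3$, so by subtracting out the content at $(2,0,1)$ and $(0,2,1)$---obtained as further combinations in a higher iterate $\tilde E(\KK)_{j}$---one isolates the $\pm e_3$ modes. The identities involving $\cos{+}\cos$ and $\sin{+}\sin$ provide the $\sin\langle e_3,x\rangle$-components. The conclusion is $\aA_{e_3},\BB_{e_3}\subset\tilde E(\KK)_j$ for some fixed $j$.

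Once full amplitudes at $e_3$ are available, one final iteration produces full amplitudes at $m_1=m_3-e_3$ and $m_2=m_4-e_3$: combining a mode at $m_3$ (still with only the one direction $P_{m_3}e$) with a mode at $e_3$ of amplitude $b\in e_3^\perp$, formula~\eqref{anh1} yields at $m_1$ the vector $P_{m_1}\bigl(\langle P_{m_3}e,e_3\rangle b-\langle b,m_3\rangle P_{m_3}e\bigr)=P_{m_1}\bigl(\tfrac12 b-b_1 P_{m_3}e\bigr)$, which traces the full plane $m_1^\perp$ as $b=(b_1,b_2,0)$ varies. Symmetrically one recovers $\aA_{m_2},\BB_{m_2}$. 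Together with the original modes at $m_3,m_4$, this establishes $E(\KK')\subset\tilde E(\KK)_{j'}$ for some $j'$, and Theorem~\ref{crit} applied to the generator $\KK'$ finishes the proof. The main obstacle is the bookkeeping that isolates new modes from bilinear outputs, since each $B$-calculation delivers two frequency contributions simultaneously; the linear combinations in $\FF(\tilde E(\KK))$ and its iterates must be arranged to cancel the unwanted pieces while the desired transverse amplitudes at $\pm e_3$ remain linearly independent.
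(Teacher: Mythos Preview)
Your plan is correct and mirrors the paper's proof: extract full amplitudes at $e_3=(0,0,1)$ via the pairs $(m_1,m_3)$ and $(m_2,m_4)$, then recover full amplitudes at $m_1,m_2$, and finish by applying Theorem~\ref{crit} to a generator of $\Z^3$. The ``bookkeeping'' obstacle you flag (contamination from $(2,0,1)$ and $(0,2,1)$) disappears once you use identities~\eqref{cru1}--\eqref{cru2} instead of the raw~\eqref{anh1}, since those sums of two $B$-values already isolate a single frequency $m\pm n$; the paper thereby obtains $\aA_{e_3},\BB_{e_3}\subset\FF(\tilde E(\KK))$ in one step and the full $m_1,m_2$ amplitudes in $\FF^2(\tilde E(\KK))$, with no higher iterate needed.
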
   Combining this proposition      with Theorem \ref{T.2.1}, we get immediately the following result.
 \begin{theorem} 
Let $h\in L^2(J_T,H^{k-1}_\sigma), k\ge3$, and $T>0$. If  $\KK$ is defined by~\eqref{tildeKK}, then system \eqref{vanish}  is approximately is controllable  at   time $T$ by controls
$\tilde \eta\in C^\infty(J_T,\tilde E(\KK))$. \end{theorem}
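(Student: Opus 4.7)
The plan is to reduce Proposition \ref{lavt} to Theorem \ref{crit} by showing that $\tilde E_j(\KK) \supset E(\KK \cup \{(0,0,1)\})$ for some $j\ge 1$. Once this inclusion is established, saturation follows because $\KK \cup \{(0,0,1)\}$ contains $\{(1,0,0),(0,1,0),(0,0,1)\}$ and is therefore a generator of $\Z^3$. The dimension count $\dim \tilde E(\KK) = 8$ is immediate: none of the $m_i\in\KK$ is parallel to $e=(0,0,1)$, so $P_{m_i}e\neq 0$ for each $i$, and the eight functions $(P_{m_i}e)\cos\langle m_i,x\rangle, (P_{m_i}e)\sin\langle m_i,x\rangle$ are linearly independent. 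That $\KK$ itself is a generator of $\Z^3$ follows from $(0,0,1)=(1,0,1)-(1,0,0)\in\Z^3_\KK$ combined with $(1,0,0),(0,1,0)\in\KK$.

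The heart of the argument rests on the following geometric degeneracy. Whenever two elements $m,n$ of $\KK$ satisfy $m-n\parallel e$, the vector $P_{m-n}e$ vanishes; the component of the output of the bilinear identities \eqref{anh1} and \eqref{anhz1} that would ordinarily land in the ``scalar'' direction $P_{m-n}e$ therefore disappears at frequency $m-n$, and only the transverse direction survives. Concretely, the pairs $(m_3,m_1)$ and $(m_4,m_2)$ satisfy $m_3-m_1=m_4-m_2=(0,0,1)$. Taking $a=P_{m_3}e=(-1/2,0,1/2)$, $b=P_{m_1}e=e$ and applying \eqref{anh1} combined with \eqref{anhz1} exactly as in Step~1 of the proof of Proposition~\ref{ppo}, the sum/difference of the two resulting $B$-terms isolates the mode $(1,0,0)\cos x_3$, and a parallel computation produces $(1,0,0)\sin x_3 \in \tilde E_1(\KK)$. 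The symmetric pair $(m_4,m_2)$ yields $(0,1,0)\cos x_3, (0,1,0)\sin x_3 \in \tilde E_1(\KK)$, so $\aA_{(0,0,1)}\oplus\BB_{(0,0,1)} \subset \tilde E_1(\KK)$.

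Having unlocked the full two-dimensional direction space at frequency $(0,0,1)$, the remaining modes are produced by combining these new modes, which carry transverse directions unavailable in $\tilde E(\KK)$, with the original modes. For example, applying \eqref{anh1}-\eqref{anhz1} to a mode at frequency $(0,0,1)$ in direction $(0,1,0)$ and a mode at frequency $m_1$ in direction $e$ produces modes at $m_1\pm(0,0,1)=(1,0,\pm1)$ in direction $(0,1,0)$; after the usual cancellation this injects the missing direction $(0,1,0)$ at frequencies $m_1$ and $m_3=(1,0,1)$, completing $\aA_{m_1}\oplus\BB_{m_1}$ and $\aA_{m_3}\oplus\BB_{m_3}$. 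An analogous chain using the direction $(1,0,0)$ at frequency $(0,0,1)$ together with the modes at $m_2$ handles $m_2$ and $m_4=m_2+(0,0,1)$. After at most three applications of $\FF$, one obtains $\aA_m\oplus\BB_m\subset\tilde E_j(\KK)$ for every $m\in\KK\cup\{(0,0,1)\}$, hence $E(\KK\cup\{(0,0,1)\})\subset\tilde E_j(\KK)$. Theorem \ref{crit} then guarantees that this larger space is saturating in $H^k_\sigma$, and so is $\tilde E(\KK)$.

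The principal obstacle is the direction tracking: at each step, the direction of an output mode is determined algebraically by the input directions and the frequency geometry, and initially $\tilde E(\KK)$ only carries modes along the one-dimensional subspaces spanned by the various $P_{m_i}e$. The reason \eqref{tildeKK} is sufficient is precisely the presence of the two pairs $(m_3,m_1)$ and $(m_4,m_2)$ whose differences equal $e$; this makes the scalar direction degenerate at frequency $(0,0,1)$ and forces the first iterate $\tilde E_1(\KK)$ to acquire the genuinely new directions $(1,0,0)$ and $(0,1,0)$ there. Once this transverse foothold is obtained, all subsequent extensions proceed by standard manipulation of \eqref{anh1}-\eqref{anhz1} along the lines of the proof of Proposition~\ref{ppo}.
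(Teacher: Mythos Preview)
Your approach is essentially the paper's: obtain the full two-dimensional direction space $\aA_{(0,0,1)}\oplus\BB_{(0,0,1)}$ inside $\FF(\tilde E(\KK))$ from the two pairs $(m_3,m_1)$ and $(m_4,m_2)$ via \eqref{cru2}, then bootstrap to a generating set and invoke Theorem~\ref{crit}. The paper does exactly this, but more economically: in its second step it combines the mode at frequency $(0,0,1)$ (now carrying an arbitrary direction $b=(b_1,b_2,0)$) with the original mode $(P_{(1,0,1)}e)$ at frequency $(1,0,1)$ to land directly on $(0,b_2/2,-b_1/2)\cos x_1$, thereby obtaining $\aA_{(1,0,0)},\BB_{(1,0,0)}$ (and symmetrically $\aA_{(0,1,0)},\BB_{(0,1,0)}$) inside $\FF^2(\tilde E(\KK))$; saturation then follows from the generator $\{(1,0,0),(0,1,0),(0,0,1)\}$ without ever revisiting $m_3$ or $m_4$.

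One slip in your second paragraph: pairing the new mode at frequency $(0,0,1)$ with the mode at $m_1$ produces outputs at $m_1\pm(0,0,1)=(1,0,\pm1)$, i.e.\ at $m_3$ and at $(1,0,-1)$, not at $m_1$. To reach $m_1$ you must pair with $m_3$ (so that $m_3-(0,0,1)=m_1$), which is precisely the paper's choice. Your conceptual remark that ``$P_{m-n}e$ vanishes'' is also not quite the operative mechanism: what makes Step~1 work is that $P_{m_3}e=(-1/2,0,1/2)$ already has a nonzero component orthogonal to $e$, and that component survives the projection $P_{(0,0,1)}$ to yield the genuinely new direction $(1,0,0)$ at frequency $(0,0,1)$. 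With these corrections your sketch coincides with the paper's proof of Proposition~\ref{lavt}.
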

         \begin{proof}  
{\it Step~1}. Let us first show that $\aA_{(0,0,1)}\subset \FF(\tilde E(\KK))$. Using   \eqref{cru2}, we get for any $\la\in \R$  
\begin{align*} 
   \la(-1/2,0 ,0) & \cos \lag (0,0,1),x \rag\\ &  =B(\la(P_{(1,0,0)}e) \cos \lag (1,0,0),x\rag+ (P_{(1,0,1)}e)\sin \lag (1,0,1),x\rag)\\&\quad  +B(-(P_{(1,0,1)}e)\cos \lag (1,0,1),x\rag+ \la(P_{(1,0,0)}e) \sin \lag (1,0,0),x\rag),
  \\  \la (0, -1/2,0) & \cos\lag (0,0,1),x \rag\\\quad\quad&=B(\la(P_{(0,1,0)}e) \cos \lag (0,1,0),x\rag+ (P_{(0,1,1)}e)\sin \lag (0,1,1),x\rag)\\&\quad +B(-(P_{(0,1,1)}e)\cos \lag (0,1,1),x\rag+ \la(P_{(0,1,0)}e) \sin \lag (0,1,0),x\rag) 
.\end{align*} The definition of  $\FF$ implies that   $\aA_{(0,0,1)}\subset \FF(\tilde E(\KK))$. A similar computation gives that        $\BB_{(0,0,1)}\subset \FF(\tilde E(\KK))$.

\smallskip
{\it Step~2}.  Again using \eqref{cru2}, we obtain for any $b:=(b_1,b_2,0)\in \R^3$ 
\begin{align*}  &(0, b_2/2,-b_1/2)\cos\lag (1,0,0),x \rag  = 
 B((P_{(1,0,1)}e) \cos \lag (1,0,1),x\rag+ b\sin \lag (0,0,1),x\rag)\nonumber  \\&\quad +
 B(-b \cos \lag (0,0,1),x\rag+ (P_{(1,0,1)}e)\sin \lag (1,0,1),x\rag) \in\FF^2(\tilde E(\KK)).
\end{align*}  This shows that $\aA_{ (1,0,0)}\subset \FF^2(\tilde E(\KK))$. Similarly one  proves  also $$\BB_{ (1,0,0)}, \aA_{(0,1,0)}, \BB_ {(0,1,0)}\subset \FF^2(\tilde E(\KK)).$$ Thus the result follows from the fact that $\{(1,0,0), (0,1,0), (0,0,1)\}$ is a generator of $\Z^3$.

\end{proof}

 \subsubsection{6-dimensional example} \label{S:2.2.3}

The following result, combined with Theorem \ref{T.2.1}, shows that that the 3D NS system can be  approximately controlled with~$\eta$ taking values    in a    $6$-dimensional space.
    \begin{proposition}\label{lsdfavt}
 Let us define the  following  $6$-dimensional space 
  \begin{align}\label{tilde}
   \hat  E :=\textup{span}\{ &  a\cos \lag (1,0,1),x\rag, a \sin \lag (1,0,1),x\rag,  \nonumber\\& e\cos \lag (0,1,1),x\rag, e \sin \lag (0,1,1),x\rag, \nonumber\\& b\cos \lag (0,0,1),x\rag, b \sin \lag (0,0,1),x\rag \},
 \end{align}  where $a:=(1,1,1), b:=(1,0,0), e:=(0,0,1)$.
 Then $\hat  E $ is   saturating     in $H^{k}_\sigma$ for any~$k\ge0$.
   \end{proposition}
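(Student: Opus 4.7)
The plan is to invoke Theorem~\ref{crit}: it suffices to show that $E(\KK) \subset \hat E_j$ for some finite $j$ and some finite generator $\KK$ of $\Z^3$, because then $\hat E_\infty \supset E(\KK)_\infty$ is dense in $H^{k-1}_\sigma$. I would take $\KK = \{(1,0,1),(0,1,1),(0,0,1)\}$; by Theorem~\ref{gij} it is a generator since the determinant of the associated $3\times 3$ matrix is~$1$. The problem thus reduces to exhibiting both polarization directions $\aA_\ell$ and $\BB_\ell$ at each of these three wave vectors inside $\hat E_j$ for some finite $j$.

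After applying $\Pi$, the space $\hat E$ provides exactly one polarization at each original wave vector: $(0,1,0)$ at $(1,0,1)$, $(0,-1,1)$ at $(0,1,1)$, and $(1,0,0)$ at $(0,0,1)$ (both in $\cos$ and $\sin$ form). Applying \eqref{cru1}, \eqref{cru2}, and their $\sin$-analogues with $m=(1,0,1),n=(0,0,1)$ and then with $m=(0,1,1),n=(0,0,1)$ yields, inside $\hat E_1$, modes with polarization $(0,1,0)$ at $(1,0,0)$ and $(1,0,2)$ and modes with polarization $(1,0,0)$ at $(0,1,0)$ and $(0,1,2)$. The obstacle is that every polarization produced at this first level lies in the horizontal plane $\{z=0\}$, whereas the missing second polarization at $(1,0,1)$ is $(1,0,-1)$.

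To escape this plane I would exploit the fact that the initial $(0,-1,1)$-polarization at $(0,1,1)$ is the only original direction with a nonzero $z$-component. Applying \eqref{cru1}, \eqref{cru2} with $m=(0,1,1)$, $n=(1,0,1)$, $a=(0,-1,1)$, $b=(0,1,0)$ produces, in $\hat E_1$, modes at the wave vectors $(1,1,2)$ and $(1,-1,0)$ both with polarization along $(1,1,-1)$. A second application of \eqref{cru2} with $m=(1,1,2)$, $n=(0,1,1)$ (so that $m-n=(1,0,1)$) and $a=(1,1,-1)$, $b=(0,-1,1)$ then gives a mode at $(1,0,1)$ whose polarization has a nonzero component along $(1,0,-1)$; subtracting the already available $(0,1,0)$-component isolates $(1,0,-1)$ and hence places $\aA_{(1,0,1)},\BB_{(1,0,1)}$ into $\hat E_2$. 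A parallel bracket with $m=(1,-1,0)$, $n=(1,0,1)$, $a=(1,1,-1)$, $b=(0,1,0)$ yields a mode at $(0,1,1)$ with polarization $(1,1,-1)$; subtracting the known $(0,-1,1)$-component isolates the direction $(1,0,0)$, completing $\aA_{(0,1,1)},\BB_{(0,1,1)}\subset\hat E_2$.

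Once both polarizations are available at $(1,0,1)$, a final bracket via \eqref{cru2} with $m=(1,0,1)$, $n=(1,0,0)$, $a=(1,0,-1)$, $b=(0,1,0)$ delivers the polarization $(0,1,0)$ at $(0,0,1)$, giving $\aA_{(0,0,1)},\BB_{(0,0,1)}\subset \hat E_3$. Therefore $E(\KK)\subset \hat E_3$ for the above generator~$\KK$, and Theorem~\ref{crit} concludes that $\hat E_\infty$ is dense in $H^{k-1}_\sigma$ for every $k\ge 0$. The main obstacle is the single-step confinement of the first-level brackets to the plane $\{z=0\}$; overcoming it requires the two-step construction through the auxiliary wave vector $(1,1,2)$, which transports the $z$-component of the $(0,-1,1)$-polarization into a useful direction at $(1,0,1)$.
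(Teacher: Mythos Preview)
Your proof is correct and follows the same overall strategy as the paper—reduce to Theorem~\ref{crit} by exhibiting full two-dimensional polarization at three wave vectors forming a generator of $\Z^3$—but the concrete path you take through the bracket identities is genuinely different. The paper chooses the target generator $\{(1,0,0),(0,0,1),(1,-1,0)\}$: it first produces single polarizations at $(1,0,0)$ and $(0,1,0)$ in $\FF(\hat E)$, then combines these with the original modes to obtain the full $(1,-1,0)$-space in $\FF^2(\hat E)$, and from there recovers the full spaces at $(1,0,0)$ in $\FF^3(\hat E)$ and at $(0,0,1)$ in $\FF^4(\hat E)$. You instead keep the original wave vectors $\{(1,0,1),(0,1,1),(0,0,1)\}$ as the generator and reach the goal one level sooner, via the auxiliary modes at $(1,1,2)$ and $(1,-1,0)$ carrying the polarization $(1,1,-1)$ that you produce in $\hat E_1$. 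Your route is slightly more economical (three iterations of $\FF$ rather than four) and makes explicit why the $(0,-1,1)$-polarization at $(0,1,1)$ is the decisive ingredient transporting a vertical component; the paper, on the other hand, works directly with the unprojected vectors $a,b,e$, which shortens the formulas (identity~\eqref{cru2} in fact remains valid without the hypotheses $a\in m^\perp$, $b\in n^\perp$, because the self-interaction terms cancel in the sum). Both arguments are equally valid.
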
 
   \begin{proof}  
{\it Step~1}. Let us first show that $\aA_{(1,-1,0)}\subset \FF^2(\hat  E)$. Using   \eqref{cru2}, we get for any $\la\in \R$  
\begin{align*} 
 \la (0, -1,-1) &\cos \lag (1,0,0),x \rag=B(\la a \cos \lag (1,0,1),x\rag+ b\sin \lag (0,0,1),x\rag)\\&  +B(-b\cos \lag (0,0,1),x\rag+ \la a \sin \lag (1,0,1),x\rag)\in \FF(\hat  E),\\
 \la (1,0,0) &\cos \lag (0,1,0),x \rag=B( \la e \cos \lag (0,1,1),x\rag+ b\sin \lag (0,0,1),x\rag)\\&  +B(-b\cos \lag (0,0,1),x\rag+ \la e \sin \lag (0,1,1),x\rag)\in \FF(\hat  E)
\end{align*}   and $ (0, -1,-1)  \sin \lag (1,0,0),x \rag, (1,0,0)  \sin \lag (0,1,0),x \rag\in \FF(\hat  E)$, similarly.
Writing
$$
(1,-1,0)=(1,0,0)-(0,1,0)=(1,0,1)-(0,1,1)
$$and applying \eqref{cru2}, we see that 
\begin{align*} 
  \la(0,0,1) &\cos\lag (1,-1,0),x \rag=B(  \la(0, -1,-1) \cos \lag (1,0,0),x\rag+b\sin \lag (0,1,0),x\rag)\\&  +B(-b\cos \lag (0,1,0),x\rag+ \la (0, -1,-1)\sin \lag (1,0,0),x\rag)\in \FF^2(\hat  E),\\
\la  (-1,-1,1) &\cos  \lag (1,-1,0),x \rag=B(\la a \cos \lag (1,0,1),x\rag+ e\sin \lag (0,1,1),x\rag)\\&  +B(-e\cos \lag (0,1,1),x\rag+\la a\sin \lag (1,0,1),x\rag)\in \FF^2(\hat  E)
.\end{align*}  This proves that  $\aA_{(1,-1,0)}\subset \FF^2(\hat  E)$. A similar computation establishes  that   $\BB_{(1,-1,0)}\subset \FF^2(\hat  E)$.

\smallskip
{\it Step~2}.  Let us show that $\aA_{(1,0,0)}, \BB_{(1,0,0)}\subset \FF^3(\hat  E) $. Taking  any  vector  $f:=(f_1,f_1,f_2) \in (1,-1,0)^\bot$, we  apply   \eqref{cru1}   
\begin{align*} 
  (0, f_1,f_2) \cos\lag &(1,0,0),x \rag=B( f \cos \lag (1,-1,0),x\rag+ b\sin \lag (0,1,0),x\rag)\\&\quad +B(b\cos \lag (0,1,0),x\rag+ f \sin \lag (1,-1,0),x\rag)\in \FF^3(\hat  E)
.\end{align*}  This proves that $\aA_{(1,0,0)}\subset \FF^3(\hat  E) $, and $\BB_{(1,0,0)}\subset \FF^3(\hat  E)$  is similar.

\smallskip
{\it Step~3}.  Let us show that $\aA_{(0,0,1)} , \BB_{(0,0,1)}\subset \FF^4(\hat  E)  $.  Again we shall prove only the first inclusion. 
For any  $g:=(0,g_1,g_2) \in \R^3$, we  apply   \eqref{cru2}   
\begin{align*} 
  (-g_2,g_1-g_2,0) \cos\lag &(0,0,1),x \rag=B(a \cos \lag (1,0,1),x\rag+ g\sin \lag (1,0,0),x\rag)\\&\quad +B(-g\cos \lag (1,0,0),x\rag+ a \sin \lag (1,0,1),x\rag)\in \FF^3(\hat  E)
.\end{align*}  This proves that $\aA_{(0,0,1)}\subset \FF^4(\hat  E) $ and $\BB_{(0,0,1)}\subset \FF^4(\hat  E)$ is similar.  By Theorem~\ref{gij}, we have that  the family   $\{(1,0,0), (0,0,1), (1,-1,0)\}$ is a generator of~$\Z^3$. Thus applying Theorem \ref{crit}, we complete the proof.

\end{proof}
 
 It would be interesting to get  a   characterisation of   finite-dimensional saturating spaces of the following general  form  
 $$
 E(\KK_c,\KK_s, a,b):=\textup{span}\{ a_m\cos\lag m,x\rag; b_n\sin\lag n,x\rag: m\in \KK_c, n\in\KK_s \},
 $$
 where $\KK_c,\KK_s\subset \Z^3$, $a:=\{a_m\}_{m\in \KK_c}\subset \R^3_*$, and $b:=\{b_n\}_{n\in \KK_s}\subset \R^3_*$. From the results of Subsection \ref{type1} it follows that both~$\KK_c$ and  $\KK_s$ are    necessarily       generators of  $\Z^3$.

    \addcontentsline{toc}{section}{Bibliography}
\def\cprime{$'$} \def\cprime{$'$}
  \def\polhk#1{\setbox0=\hbox{#1}{\ooalign{\hidewidth
  \lower1.5ex\hbox{`}\hidewidth\crcr\unhbox0}}}
  \def\polhk#1{\setbox0=\hbox{#1}{\ooalign{\hidewidth
  \lower1.5ex\hbox{`}\hidewidth\crcr\unhbox0}}}
  \def\polhk#1{\setbox0=\hbox{#1}{\ooalign{\hidewidth
  \lower1.5ex\hbox{`}\hidewidth\crcr\unhbox0}}} \def\cprime{$'$}
  \def\polhk#1{\setbox0=\hbox{#1}{\ooalign{\hidewidth
  \lower1.5ex\hbox{`}\hidewidth\crcr\unhbox0}}} \def\cprime{$'$}
  \def\cprime{$'$} \def\cprime{$'$} \def\cprime{$'$}
\providecommand{\bysame}{\leavevmode\hbox to3em{\hrulefill}\thinspace}
\providecommand{\MR}{\relax\ifhmode\unskip\space\fi MR }
\providecommand{\MRhref}[2]{%
  \href{http://www.ams.org/mathscinet-getitem?mr=#1}{#2}
}
\providecommand{\href}[2]{#2}

\end{document}